\newdimen\epsfxsize
\newdimen\epsfysize
\newcommand{\be}{\begin{equation}}
\newcommand{\ee}{\end{equation}}
\newcommand{\bes}{\begin{equation*}}
\newcommand{\ees}{\end{equation*}}
\newcommand{\s}{\sigma}
\renewcommand{\l}{\lambda}
\newcommand{\g}{\gamma}
\newcommand{\G}{\Gamma}
\renewcommand{\H}{\mathbb H}
\newcommand{\DD}{\mathbb D}
\newcommand{\R}{\mathbb R}
\renewcommand{\O}{\Omega}
\newtheorem{thm}{Theorem}[section]
\newtheorem{prop}[thm]{Proposition}
\newtheorem{defn}[thm]{Definition}
\newtheorem{cor}[thm]{Corollary}
\newtheorem{lemma}[thm]{Lemma}
\def\1{{\bf 1}}
\begin{document}

\title{\bf Loewner Curvature}
\bigskip
\author{{\bf Joan Lind\footnote{Research supported in part by NSF grant DMS-1100714} }~{\bf and}  {\bf Steffen Rohde\footnote{Research supported in part by NSF grant DMS-1068105}}
\\
\\
}

\maketitle

\abstract{The purpose of this paper is to interpret the phase transition in the Loewner theory as an analog of the hyperbolic variant of the Schur theorem about curves of bounded curvature. We define a family of curves that have a certain conformal self-similarity property. They are characterized by a deterministic version of the domain Markov property, and have constant {\it Loewner curvature}. We show that every sufficiently smooth curve in a simply connected plane domain has a best-approximating curve of constant Loewner curvature, establish a geometric comparison principle, and show that curves of Loewner curvature bounded by 8 are simple curves.}

\bigskip

\section{Introduction and Results}\label{intro}

A classical theorem of A. Schur \cite{S} says that, among all curves in the plane of curvature bounded above by $K$,
the circular arc of (constant) curvature K minimizes the distance between the endpoints (assuming the length of the curve is smaller than $\pi/K$). The analog for the hyperbolic plane was established in \cite{FG}: Again, curves of constant (geodesic) curvature are extremal, but an interesting phenomenon occurs. Namely, if the geodesic curvature (which can simply be defined at the point 0 in the hyperbolic unit disc $\DD$ as half of the euclidean curvature, and then at arbitrary points via isometry) is $\leq1$, then the curve is automatically a simple curve that tends to the boundary of $\DD$, whereas the curves of constant curvature $>1$ are circles in $\DD$ in thus do not tend to the boundary.

A  similar ``phase transition" occurs in the Loewner theory: In the framework of the chordal Loewner equation in the upper half plane (see Section \ref{Section:LE}), if a curve $\gamma$ has driving term $\l$ which is H\"older continuous with exponent $1/2$ and norm $||\l||_{1/2}<4$, then $\gamma$ is a simple curve, whereas for norm $\geq4$ the curve can have self-intersections, see \cite{Li} and \cite{RTZ}.

\begin{figure}
\centering
\includegraphics[scale=0.6]{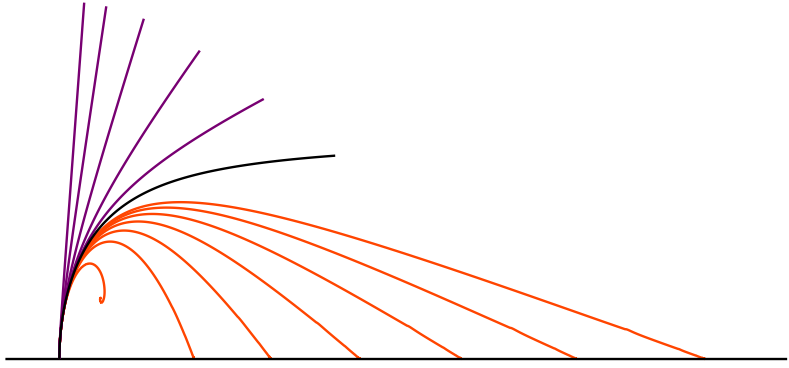}\hfill
\caption
{Some curves of constant Loewner curvature. In our normalization, the purple curves have negative curvature, the black curve has infinite curvature, and the red curves have positive curvature. 
\label{combined}
}\end{figure}

The purpose of this paper is to define a quantity that we call {\it Loewner curvature}, and to establish some of its properties, particularly regarding the above phenomenon. 
Our geometric definition of Loewner curvature will depend on
 a family of curves, introduced in Section 2, that we call {\it self-similar}. 
They are motivated by the Schramm-principle and will be
used as comparison curves. In particular, these are our curves of constant curvature.
In Section \ref{Loewnerframework}, we identify these curves using the Loewner differential equation, and obtain that it is a two-parameter family of curves. A one-dimensional collection of these curves is depicted in Figure 
\ref{combined}. We also compute the first terms of the series expansion of a curve $\gamma$ in terms of the first and second derivative of the Loewner driving term, and show in Proposition \ref{SeriesProp} that 
$$
\g(t) =  2i\sqrt{t} + a \, t - i \frac{a^2}{8}\, t^{3/2} + b\, t^2 + o( t^2 )
$$ 
for $t$ near 0,  where $a = \frac{2}{3} \l'(0)$ and $b = \frac{4}{15} \l''(0) + \frac{1}{135} \l'(0)^3$.
This allows us (Proposition \ref{CompPrin}) to associate with every sufficiently smooth curve a best-fitting comparison curve.  With this foundation, we can make our geometric definition for Loewner curvature  and obtain our Loewner curvature formula
$$
LC_{\g}(t) = \frac{\l'(t)^3}{\l''(t)}
$$
in Section 4. There we also show (Theorem \ref{simpleThm}) that $\g$ is a simple curve if $ LC_{\g}(t) < 8$. The constant $8$ is best possible and corresponds to the constant 4 in the criterion $||\l||_{1/2}<4$ for simple Loewner traces.
In Section 5, we establish a comparison principle (Theorem \ref{below}) by showing that a bound on Loewner curvature
implies that the curve stays to one side of the corresponding curve of constant curvature.

\smallskip

\noindent {\bf Acknowledgement:}  We thank Fredrik Viklund for his comments on the first version of the manuscript.

\section{A family of curves}\label{curves}

In this section, we will introduce a family of curves, which we call ``self-similar curves."
In Section \ref{Lcurvedef}, we will define the Loewner curvature for these curves to be constant.
The Loewner curvature for an arbitrary curve will be defined by comparison to this family. 
In order to give some motivation for our definition of the self-similar curves, 
we will first remind the reader of Schramm's principle  and then formulate a deterministic version.

Let $\O$ be a Jordan domain   with two distinct marked boundary points, $a$ and $b$.  
For each triple $(\O, a, b)$, we assume that there is  
 a family $\G_{\O, a, b}$ of simple curves  in $\overline{\O}$ that begin at $a$
  and    a probability measure $\mu_{\O, a, b}$ on $\G_{\O, a, b}$.  (Note: This could be made slightly more general: we do not need simple curves, but we must require that the curves do not cross over themselves.)
The measures $\mu_{\O, a, b}$ are said to satisfy Schramm's principle if they satisfy the following:
\begin{enumerate}

\item[*] Conformal invariance:  Given a conformal map $\phi$ of the domain $\O$, then
$$\phi(\mu_{\O, a, b}) = \mu_{\phi(\O), \phi(a), \phi(b)}.$$
(The measure $\phi(\mu_{\O, a, b})$  on the family of curves $\phi(\G_{\O, a, b})$ is the push-forward of the measure $\mu_{\O, a, b}$.)

\item[*] Domain Markov property:  Let $\g[0,t]$ be the initial part of a curve in $\G_{\O, a, b}$.  
Let $\mu_{\O, a, b \, |\g[0,t]}$ be the conditional probability measure given $\g[0,t]$.  
Then 
$$ \mu_{\O, a, b\, |\g[0,t]} = \mu_{\O \setminus \g[0,t], \g(t), b}$$
or in other words, given the initial part of the curve $\g[0,t]$, the conditional measure is the same as the measure on curves starting at $\g(t)$ in the slit domain $\O \setminus \g[0,t]$.\\
\end{enumerate}

Given $\g[0,t]$,   define $G_t$ to be the set of all conformal maps
 $\phi_t: \O \setminus \g[0,t] \to \O$
                              with $\phi_t(\g(t)) = a$ and $\phi_t(b) = b$. 
Thus if $\mu_{\O, a, b}$ satisfy conformal invariance and the domain Markov property, 
then  
$$ \phi_t \left( \mu_{\O, a, b \, | \g[0,t]} \right) = \mu_{\O, a, b} $$
for any $\phi_t \in G_t$.
This means that the measure is invariant under  ``mapping down" the initial part of a curve.
Measures that satisfy Schramm's principle must be from the one-parameter family of SLE$_\kappa$ measures.

This leads us to a deterministic Schramm's principle, that is, a deterministic axiom that will characterize an important family of curves.  
In this paper we will be interested in curves in a domain with two marked boundary points.  Let us introduce our notation:
\begin{defn}
Given a Jordan domain $\O$ with distinct boundary points $a$ and $b$ and given $T \in (0, \infty]$,
the notation  $\g:(0, T) \to (\O, a, b)$ means that $\g[0,T)$ is a simple (deterministic) curve  with $\g(0,T) \subset \O$ and $\g(0) =a$.
\end{defn}

We have several comments about the definition:  
First, the point $b$ does not enter into the definition, and in particular, we do not require that $\g(T) =b$.  However,  we think of $b$ as being the location of an observer.  When we introduce our notion of Loewner curvature later, it will be relative to this observer.
Second, we note that when $\g(T)$ is defined,  it is possible that $\g(T) \in \partial \O$ or $\g(T) \in \g(0,T)$.
Next, we consider two curves equivalent if one is a reparametrization of the other.   For the most part, we are not concerned with the particular parametrization of a curve; however when working in the Loewner framework, we will use the typical halfplane-capacity parametrization.
Lastly, for ease of notation, we will often simply write $\g$ for $\g(0,T)$, the image of the open time interval.  

We want to understand the curves  that satisfy the following self-similarity property:

\begin{defn}\label{defss}
A curve $\g:(0, T) \to (\O, a, b)$ is self-similar if $\gamma\in C^3$ and if
for each $t \in (0,T)$, there exists 
a conformal map $ \phi_t  \in G_t$ so that
$$ \phi_t (\g(t,T)) = \g.$$
We write $S(\O, a, b)$ for the family of self-similar curves in the marked domain $(\O, a, b)$.
\end{defn}

In other words, self-similar curves are invariant under ``mapping down" any initial part of the curve, modulo a conformal renormalization.  
In Theorem \ref{CCprop} below, we will determine all self-similar curves.  In particular, we obtain the following: 

\begin{cor}\label{2param}
Given a Jordan domain $\O$ with distinct boundary points $a$ and $b$, there exists a unique two-parameter family of self-similar curves.  
\end{cor}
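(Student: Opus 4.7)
The plan is to use conformal invariance to reduce to a canonical domain, apply the chordal Loewner equation, translate self-similarity into a functional equation for the driving term, and solve it via a simple ODE.

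First, I would note that self-similarity (Definition \ref{defss}) is conformally natural: if $\Psi:(\O',a',b')\to(\O,a,b)$ is a conformal equivalence, then $\g\in S(\O,a,b)$ if and only if $\Psi^{-1}(\g)\in S(\O',a',b')$, with the ``mapping down'' maps obtained by conjugation $\Psi^{-1}\phi_t\Psi$. By the Riemann mapping theorem it therefore suffices to classify $S(\H,0,\infty)$ and transport back via a Riemann map. Note that the conformal automorphisms of $\H$ fixing $0$ and $\infty$ form exactly the one-parameter group of scalings $z\mapsto cz$, $c>0$; this is the residual flexibility left in the choice of $\phi_t\in G_t$.

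Next, I would parametrize $\g\in S(\H,0,\infty)$ by half-plane capacity, let $g_t$ denote its chordal Loewner chain with driving function $\l(t)$, and observe that any $\phi_t\in G_t$ has the form $\phi_t(z)=c(t)(g_t(z)-\l(t))$ for some $c(t)>0$. The image $g_t(\g[t,t+s])$ is a curve starting at $\l(t)$ whose Loewner driving function is $\l(t+\cdot)$. After translating by $-\l(t)$ and scaling by $c(t)$ (which multiplies half-plane capacity by $c(t)^2$ and driving functions by $c(t)$), the curve $\phi_t(\g[t,t+s])$ in its own capacity parametrization has driving function $\tilde\l(\t)=c(t)\bigl[\l(t+\t/c(t)^2)-\l(t)\bigr]$. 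The set identity $\phi_t(\g(t,T))=\g$, together with the uniqueness of the driving function of a simple chordal trace, then forces $\tilde\l=\l$, i.e.,
$$c(t)\,[\l(t+\s)-\l(t)]=\l(c(t)^2\s)\qquad\text{for all } t,\s\ge 0.$$
Expanding in $\s$ at $\s=0$ and matching the $\s$ and $\s^2$ coefficients gives $c(t)=\l'(t)/\l'(0)$ and
$$\frac{\l''(t)}{\l'(t)^3}=\frac{\l''(0)}{\l'(0)^3}=:K,$$
so $\l$ solves the autonomous ODE $\l''=K(\l')^3$ with normalization $\l(0)=0$. Its solution space is exactly two-dimensional, parametrized by the constants $\l'(0)$ and $K$, and each such $\l$ satisfies the original functional equation by direct verification, hence generates a self-similar curve. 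Since conversely every self-similar curve produces such a $\l$, the family $S(\H,0,\infty)$ is precisely this two-parameter family, and Corollary \ref{2param} follows by transport through a Riemann map.

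The principal obstacle is the translation step in the second paragraph: carefully passing from the geometric identity $\phi_t(\g(t,T))=\g$, a mere equality of pointsets, to a functional equation for $\l$. One must reparametrize $\phi_t(\g[t,\cdot))$ by half-plane capacity, track how the scaling factor $c(t)$ interacts with the Loewner chain, and invoke uniqueness of the driving function of a simple trace. A secondary subtlety is the degenerate case $\l'(0)=0$, which forces $\l\equiv 0$ and produces the positive imaginary axis; this sits as a boundary member of the two-parameter family and should be recorded separately.
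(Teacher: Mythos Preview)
Your argument is essentially the paper's own: reduce to $(\H,0,\infty)$, write $\phi_t=r_t(g_t-\l(t))$, derive the same functional equation for $\l$ (your version is the paper's equation (3.3) after the substitution $s=r_t^2\sigma$), differentiate at the origin, and solve the resulting ODE $\l''=K(\l')^3$. One technical point you pass over but the paper makes explicit: the $C^3$ hypothesis on $\g$ together with the Earle--Epstein/Marshall regularity result only yields $\l\in C^2(0,T)$, not $C^2[0,T)$; the paper then uses the functional equation itself (differentiability of the left side at $s=0$ for any fixed $t>0$) to bootstrap $\l\in C^2$ at the endpoint, which is exactly what you need before you can ``match the $\s$ and $\s^2$ coefficients.''
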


The assumption on the smoothness of self-similar curves in Definition \ref{defss} is probably unnecessary and only used to establish the regularity needed for our proof of Theorem \ref{CCprop}. It could be replaced by requiring that the $\phi_t$ are continuous in $t.$

\section{Self-similar curves and the Loewner framework}\label{Loewnerframework}

If we know $S(\O_0, a_0, b_0)$ for one fixed triple $(\O_0, a_0, b_0)$, then we can obtain $S(\O, a, b)$ via a conformal map $\psi : \O_0 \to \O$ with $\psi(a_0)=a$ and $\psi(b_0)=b$.  That is,
$$  S(\O, a, b) = \{ \psi \circ \g \, : \, \g \in S(\O_0, a_0, b_0) \}.$$
It is most convenient to work with $\O_0 = \H$, the upper half-plane, with $a_0 = 0$ and $b_0= \infty$, and for the remainder of this section we will work in this setting.  
We take the viewpoint that doing computations with the triple $(\H, 0, \infty)$ is like 
 ``working in coordinates."

Our goal is to use the chordal Loewner equation to understand  the family $S(\H, 0, \infty)$.  We begin with a brief review of the chordal Loewner equation (that the expert can safely skip.)

\subsection{The chordal Loewner equation}\label{Section:LE}

Let $\l: [0,T]  \to \R$ be continuous, and let $z \in \H$.
Then the chordal Loewner differential equation is the following initial value problem:
\begin{align} \label{le} 
\frac{\partial}{\partial t} g_t(z) &= \frac{2}{g_t(z) - \l(t)} , \\
				g_0(z) &= z. \nonumber
\end{align}
We call $\l$ the driving function, since it determines the unique solution $g_t(z)$, which is guaranteed to exist for some time interval.
The only obstacle in solving \eqref{le} is obtaining a zero in the denominator.  We collect these problem points together in the set $K_t$, called the (Loewner) hull.  So
$$K_t := \{ z \in \H \, : \, g_s(z) = \l(s) \text{ for some }  s \in (0,t] \}.$$
If $z \notin K_t$, then $g_t(z)$ is well-defined, and it can be shown that $\H \setminus K_t$ is simply connected and $g_t$ is the unique conformal map from $\H \setminus K_t$ onto $\H$ that satisfies the following normalization at infinity (called the hydrodynamic normalization):
\begin{equation} \label{hydronorm}
g_t(z) = z + \frac{c(t)}{z} + O(\frac{1}{z^2}).
\end{equation}  
Further, $c(t) = 2t$, and this quantity is called the halfplane capacity of $K_t$.  For more details, see \cite{La}.

In the simplest situation, there is a simple curve $\g$ so that  $K_t = \g(0,t]$.
It is also possible that $K_t$ is the complement of the unbounded component of 
$\H \setminus \g(0,t]$ for a non-simple curve $\g$ that touches itself or $\R$.  In both these situations,  the  curve $\g$ is called the trace.
For example, if $\l(t) = 3 - 3\sqrt{1-t}$, then $K_1$ is the simple curve shown on the left of Figure \ref{example1}.
When $\l(t) = 5  -  5 \sqrt{1-t}$, then  $K_1$ is the union of the curve shown in the right of Figure \ref{example1} and the region under the curve.
See \cite{KNK} for a detailed discussion of these examples.
There are other possibilities for $K_t$ (such as a space-filling curve), but these do not concern us in this paper.

\begin{figure}
\centering
\includegraphics[scale=0.5]{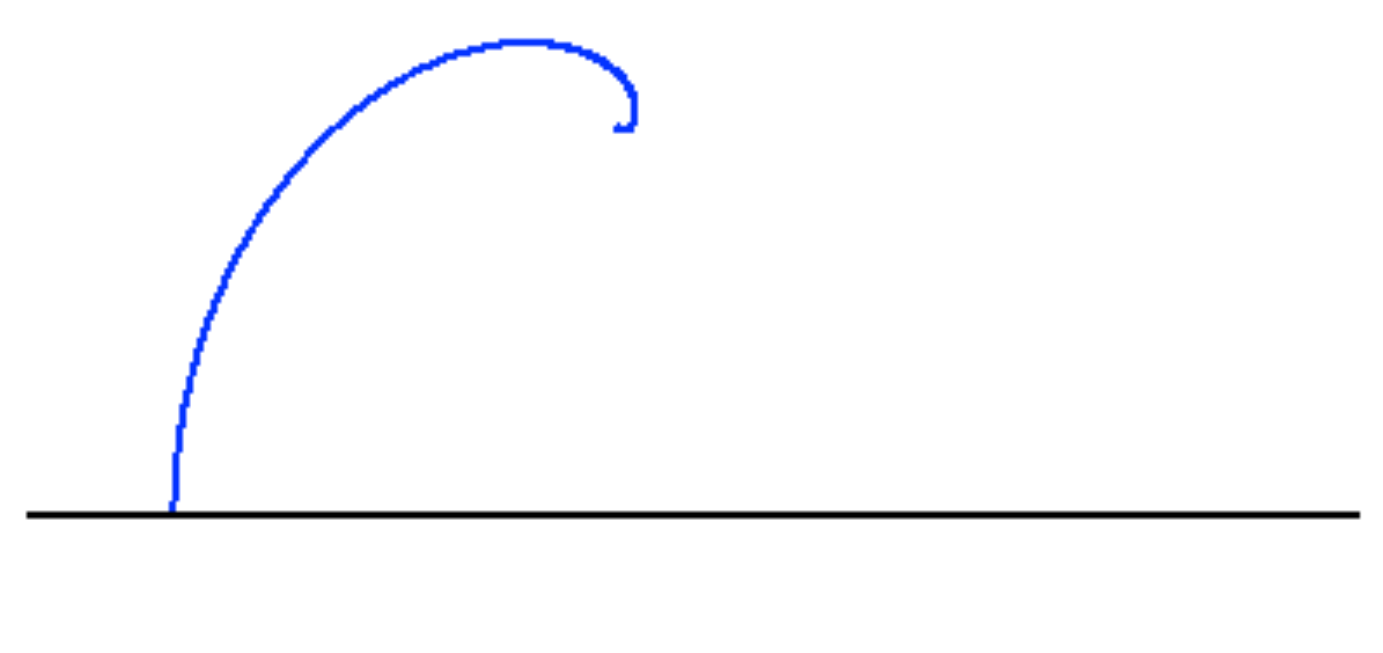} \includegraphics[scale=0.5]{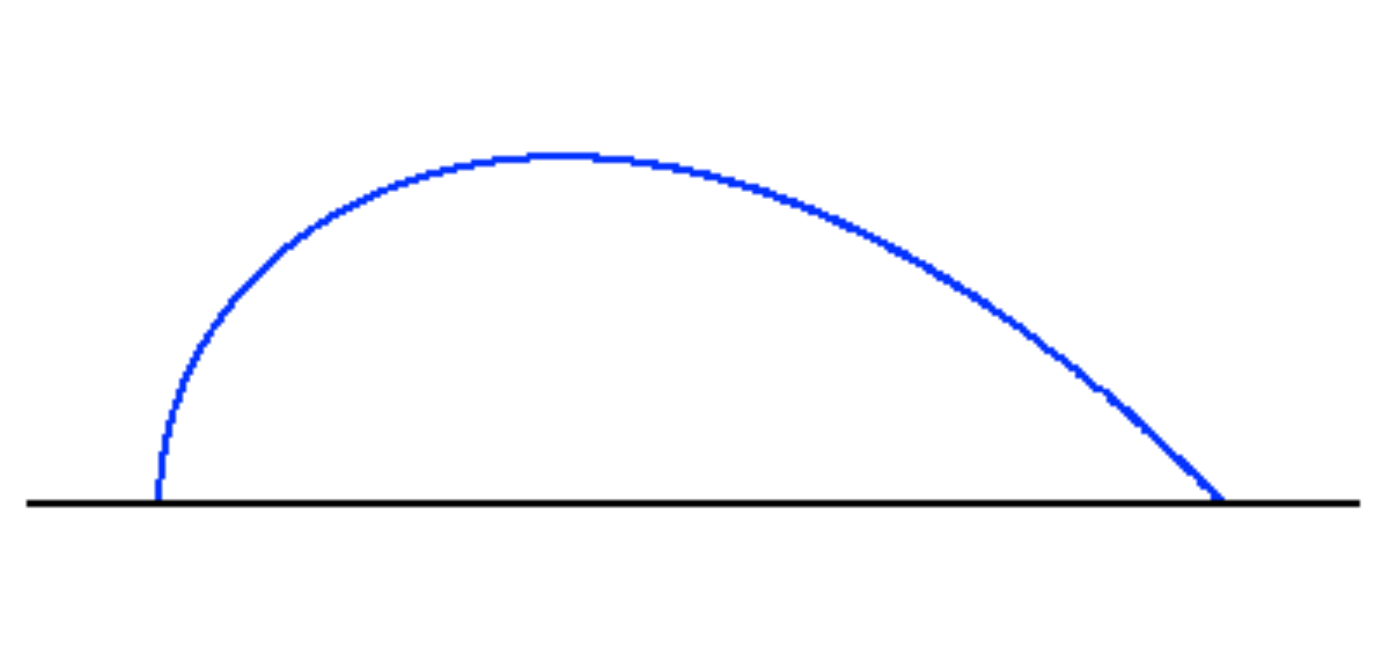}
\caption{The curves generated by driving function $c  -  c \sqrt{1-t}$ when $c=3$ and $c=5$. \label{example1}
}\end{figure}

The Loewner equation provides a correspondence between continuous driving functions and certain families of hulls.  Above, we briefly explained how one can start with a continuous function and use the Loewner equation to obtain a hull.  
On the other hand, if we have an 
appropriate
family of hulls\footnote{For the precise statement, see Section 4.1 in \cite{La}.}, 
we can determine the driving function.  
We will assume for simplicity that the family of hulls is $K_t = \g(0,T]$ for a simple curve $\g$ in $\H$ with $\g(0) \in \R$.  
Let $g_t: \H \setminus \g(0,t] \to \H$ be the conformal map with the hydrodynamic normalization at infinity (the normalization given in \eqref{hydronorm}.)
We can reparameterize $\g$ as necessary so that  in \eqref{hydronorm} $c(t) = 2t$, in which case we say that $\g$ is parametrized by halfplane capacity. 
Then one can show that the conformal maps $g_t$  satisfy \eqref{le} with $\l(t) = g_t(\g(t))$, that is, $\l$ is the conformal image of the tip of the curve $\g$.  

We list some simple but useful properties of the chordal Loewner equation.
Assume that the hulls $K_t$ are generated by the driving term $\l(t)$.  
\begin{enumerate}
\item  {\it Scaling}: For $r>0$, the driving term of
the scaled hulls $r K_{t/r^2}$ is $r \l(t/ r^2 )$.
\item {\it Translation}: For $x \in \mathbb{R}$,
the driving term of shifted hulls $K_t+x$ is $\l(t)+x$.
\item {\it Reflection}:  The driving term of the reflected hulls $R_I(K_t)$ is $-\l(t)$, where
$R_I$ denotes reflection in the imaginary axis.  
\item {\it Concatenation}:
For fixed $\tau$, $\l(\tau+t)$ is the driving function of the mapped hulls $g_{\tau}(K_{\tau+t})$.
\end{enumerate}

We end this section by mentioning
the following theorem, which follows from the work in  \cite{EE} and \cite{M}:
\begin{thm}[Earle, Epstein, Marshall] \label{thmEE}
Assume $\g:(0,T) \to (\H,0,\infty)$ has driving function $\l(t)$.  
If any parametrization of $\g$ is $C^n$, then  the halfplane-capacity parametrization of $\g$ is in $C^{n-1}(0,\tau)$ and $\l \in C^{n-1}(0,\tau)$, where $\tau$ is the halfplane-capacity of $\g$ (and may be infinite.)

\end{thm}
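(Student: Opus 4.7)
The plan is to combine two classical inputs: the Earle--Epstein boundary regularity theorem for conformal maps onto the complement of a smooth Jordan arc \cite{EE}, and Marshall's analysis of the Loewner equation for smooth traces \cite{M}. The strategy is to promote $C^n$ regularity of the curve, through boundary regularity of the maps $g_t$, to both the halfplane-capacity parametrization and the driving function $\l$.

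Let $s$ be any $C^n$ parameter for $\g$. At the tip of $\g(0,s]$ the map $g_{\g(0,s]} : \H \setminus \g(0,s] \to \H$ has a square-root branch point, so to apply boundary regularity one passes to the branched double cover of $\H$ ramified at the image of the tip, composing with $w \mapsto w^2$ to desingularize. Earle--Epstein then yields that the resolved map extends $C^n$ up to and across the tip, and that the family varies $C^n$ in $s$. Reading off the hydrodynamic expansion $g_{\g(0,s]}(z) = z + c(s)/z + O(1/z^2)$, one sees that $s \mapsto c(s) = \hcap(\g(0,s])$ is $C^n$, and since $c$ is strictly increasing on $(0,T)$, the implicit function theorem produces a smooth change of variable between $s$ and the halfplane-capacity parameter $t = c(s)/2$.

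The driving function is $\l(t) = g_t(\g(t))$, the common boundary value of $g_t$ at the tip. Because the unavoidable square-root branch point lives precisely there, this boundary value is one derivative less regular in $t$ than the ambient smooth diffeomorphism in the double-cover chart: differentiating the identity $g_t(\g(t))=\l(t)$ along the slit forces one derivative of $\g$ to appear in $\l'$. Carrying out this accounting, as in \cite{M}, yields $\l \in C^{n-1}(0,\tau)$. The halfplane-capacity parametrization of $\g$ then inherits the same regularity by running the Loewner ODE $\partial_t g_t(z) = 2/(g_t(z) - \l(t))$: with $\l \in C^{n-1}$, standard ODE theory gives $t \mapsto g_t(z)$ of class $C^n$ for $z$ off the hull, and recovering $\g(t)$ as the advancing tip involves one further differentiation at the branch point, producing $\g \in C^{n-1}(0,\tau)$ in the capacity parametrization.

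The main obstacle is the precise bookkeeping at the tip. The square-root singularity of $g_t$ there is the sole source of regularity loss, and the delicate point is to show it costs exactly one derivative, neither more nor less; this is what makes the Earle--Epstein double-cover extension and the Marshall analysis necessary rather than a naive composition of $C^n$ functions.
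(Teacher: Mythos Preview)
The paper does not supply its own proof of this theorem: it is stated as a result that ``follows from the work in \cite{EE} and \cite{M}'' and is used as a black box thereafter. So there is no in-paper argument to compare your proposal against; your sketch is already more than the paper provides.

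That said, as a standalone outline your sketch has the right architecture but a couple of soft spots. First, Earle--Epstein gives boundary regularity of a single map and smooth dependence on the slit; you invoke this to conclude that $s\mapsto c(s)=\hcap(\g(0,s])$ is $C^n$, but $c(s)$ is read off from the expansion at $\infty$, not at the tip, so you should say explicitly why smooth variation of the maps near the tip propagates to smoothness of the coefficient at infinity (it does, since the hydrodynamically normalized map is determined globally by the slit, but this step deserves a sentence). Second, your final paragraph recovering $\g\in C^{n-1}$ in the capacity parametrization from the Loewner ODE is close to circular: you cannot extract $\g(t)$ from $g_t$ without already knowing where the tip is, and ``recovering $\g(t)$ as the advancing tip involves one further differentiation at the branch point'' is an assertion, not an argument. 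The cleaner route is to stay in the $s$-parameter: once $c(s)$ is $C^n$ with $c'>0$, the reparametrization $t\mapsto s(t)$ is $C^n$ on the open interval, and composing a $C^n$ curve with a $C^n$ change of variable keeps it $C^n$; the drop to $C^{n-1}$ comes only through $\l$, where the square-root at the tip genuinely costs a derivative, and that is the step handled in \cite{M}.
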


\subsection{Computations in coordinates}

The following theorem implies Corollary \ref{2param}.

\begin{thm} \label{CCprop}
Assume $\g:(0,T) \to (\H,0,\infty)$ has driving function $\l(t)$.
If $\g \in S(\H,0,\infty)$, then $\l$ is completely determined by the two real parameters $\l'(0)$ and $\l''(0)$.
In fact, $\g \in S(\H,0,\infty)$ if and only if 
$\l(t)$ is one of following driving functions:
\begin{equation} \label{drivingfcns} 
0, \;\;\; ct, \;\;\; c\sqrt{\tau} - c\sqrt{\tau-t},  \;\; \text{or } \;\;  c\sqrt{\tau + t} - c\sqrt{\tau}
\end{equation}
 where $c \in \mathbb{R}\setminus \{0\}$ and $\tau >0$.  
\end{thm}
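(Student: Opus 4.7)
The plan is to translate self-similarity into a functional equation on the driving function $\l$, differentiate to extract an ODE, and classify the solutions. First, any map $\phi_\tau \in G_\tau$ factors through the Loewner map: since $g_\tau$ already conformally sends $\H\setminus\g[0,\tau]$ to $\H$ with $g_\tau(\g(\tau)) = \l(\tau)$ and $g_\tau(\infty) = \infty$, and the only conformal self-maps of $\H$ fixing both $0$ and $\infty$ are the dilations, we must have $\phi_\tau(z) = r_\tau(g_\tau(z) - \l(\tau))$ for some $r_\tau > 0$. Combining the concatenation, translation, and scaling properties of the Loewner equation, the hulls $\phi_\tau(\g[\tau, \tau + s/r_\tau^2])$, parametrized by halfplane-capacity, have driving function
\[
f_\tau(s) \;=\; r_\tau\bigl[\l(\tau + s/r_\tau^2) - \l(\tau)\bigr].
\]
The self-similarity identity $\phi_\tau(\g[\tau, T]) = \g$ and the uniqueness of the driving function (given a hull parametrized by halfplane-capacity from a boundary point) then force $f_\tau \equiv \l$ on a common interval.

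By Theorem \ref{thmEE}, the assumption $\g \in C^3$ gives $\l \in C^2$, so I can differentiate $f_\tau(s) = \l(s)$ at $s = 0$ once and twice to obtain
\[
\l'(0) = \frac{\l'(\tau)}{r_\tau}, \qquad \l''(0) = \frac{\l''(\tau)}{r_\tau^3}.
\]
If $\l'(0) \ne 0$ and $\l''(0) \ne 0$, eliminating $r_\tau$ yields the ODE $\l''(\tau) = \l'(\tau)^3 / c$ with $c := \l'(0)^3/\l''(0)$. This is exactly the statement that the Loewner curvature is constant. Separating variables gives $\l'(\tau)^{-2} = \l'(0)^{-2} - 2\tau/c$, and integrating under $\l(0) = 0$ produces exactly the two square-root families in the theorem statement, with the sign of $c$ determining which. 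The degenerate cases fall out of the same two relations: if $\l'(0) = 0$, then $\l'(\tau) = r_\tau \l'(0) \equiv 0$ so $\l\equiv 0$; and if $\l'(0) \ne 0$ but $\l''(0) = 0$, then $\l''(\tau) \equiv 0$, so $\l(t) = ct$.

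For the converse, I would plug each of the four candidate driving functions into the functional equation and solve for $r_\tau$ explicitly: $r_\tau = 1$ works for $\l = 0$ and $\l = ct$, while $r_\tau = \sqrt{\tau/(\tau - \cdot)}$ and $r_\tau = \sqrt{\tau/(\tau + \cdot)}$ (where $\tau$ denotes the parameter in the driving function and $\cdot$ the time) handle the two square-root cases, and a direct algebraic check verifies $f_\tau \equiv \l$; together with the fact that each such hull is a simple curve in the relevant parameter range, this produces the required $\phi_\tau$. The main obstacle is the careful passage from the geometric hypothesis $\phi_\tau(\g[\tau,T]) = \g$ to the pointwise identity $f_\tau(s) = \l(s)$: this requires tracking the halfplane-capacity parametrization through composition of the scaling, translation, and concatenation rules, and invoking uniqueness of driving functions for capacity-parametrized hulls. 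Once this reduction is in hand, the rest is ODE bookkeeping.
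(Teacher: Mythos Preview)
Your proposal is correct and follows essentially the same route as the paper: factor $\phi_\tau$ as $r_\tau(g_\tau - \l(\tau))$, use the concatenation/scaling/translation rules to obtain the functional equation $r_\tau[\l(\tau + s/r_\tau^2) - \l(\tau)] = \l(s)$, differentiate at $s=0$ to get the two relations $\l'(\tau) = r_\tau\l'(0)$ and $\l''(\tau) = r_\tau^3\l''(0)$, and then do the case analysis and ODE integration. One small point worth tightening: Theorem~\ref{thmEE} only yields $\l \in C^2(0,T)$ on the open interval, so before differentiating at $s=0$ you should note (as the paper does) that the left side of the functional equation is twice differentiable at $s=0$ for any fixed $\tau>0$, which forces $\l$ to be $C^2$ at $0$ as well.
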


\begin{figure}
\centering
\includegraphics[scale=0.5]{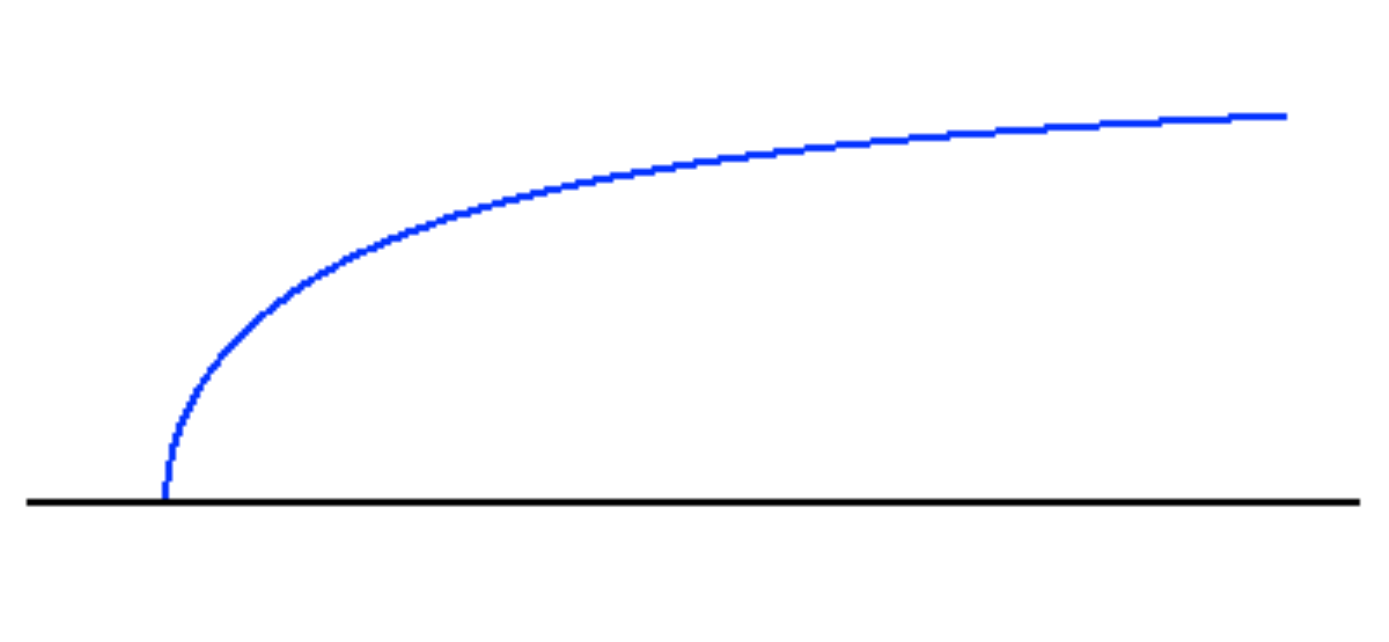} \includegraphics[scale=0.5]{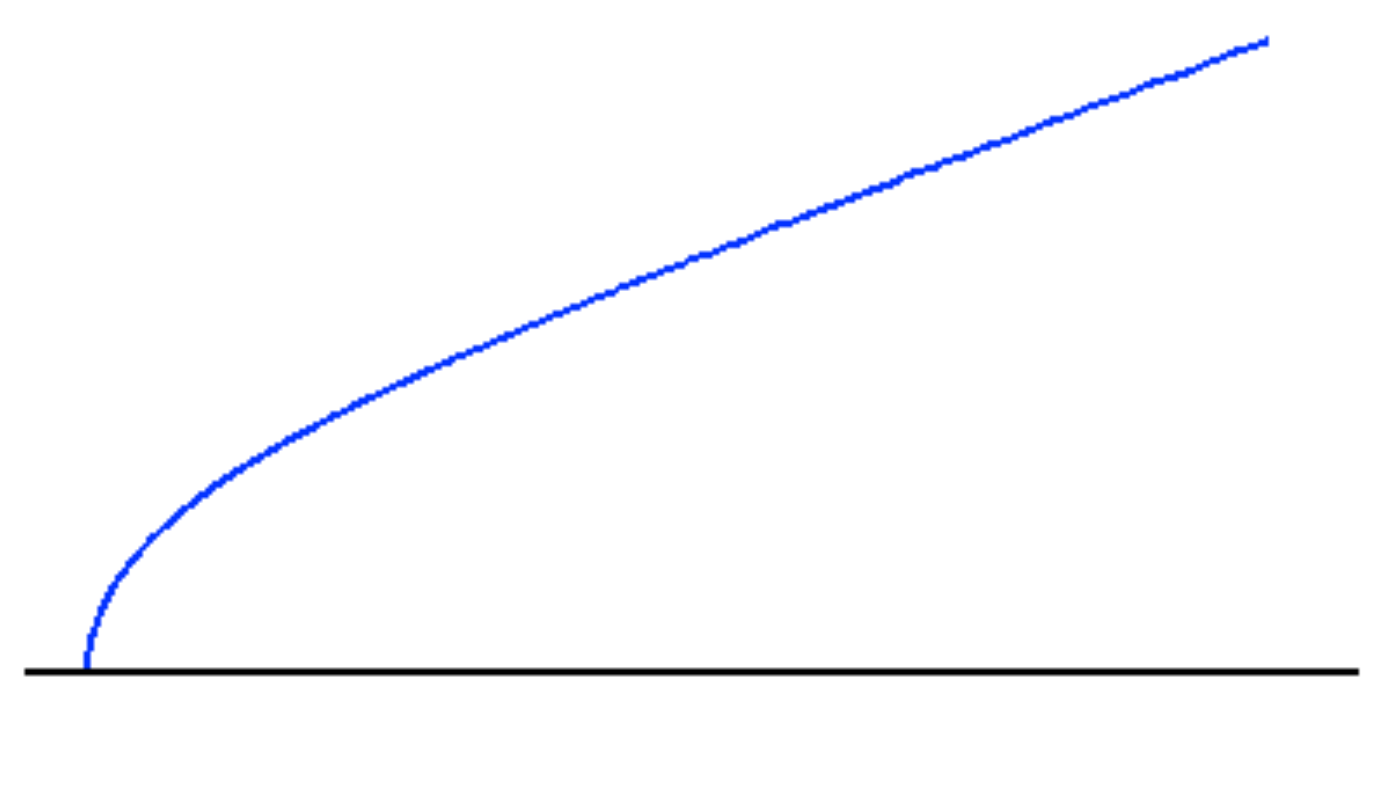}
\caption{The curves generated by driving functions $t$ and $5 \sqrt{1+t} -5$.  \label{example2}
}\end{figure}

Before the proof, we wish to briefly describe the curves generated by the driving functions listed in \eqref{drivingfcns}.   
\begin{enumerate}
\item     If $\l(t)=0$, then $\g$ will be the ray $\{i r \, : \, r >0 \}$.  
 \item    If $\l(t) = ct$, then $\g$ will be a scaled version  of the left curve shown in Figure \ref{example2} (and reflected around the imaginary axis if $c<0$).  This curve goes to infinity, but remains within a bounded distance of the real line.
 \item   If $\l(t) = c\sqrt{\tau} - c\sqrt{\tau-t}$ and $ |c| < 4$, then $\g$ will end with an infinite spiral.  The spiral is so tight as to not be visually discernable, as in the example shown in Figure \ref{example1}.  
        If $\l(t) = c\sqrt{\tau} - c\sqrt{\tau-t}$  and $ |c| \geq 4$, then $\g(\tau) \in \R$, and the angle of intersection depends on $c$. 
        These examples were first described and implicitely computed in \cite{KNK}.
         In both cases, changing $\tau$ scales the picture.
 \item    If $\l(t) = c\sqrt{\tau+t} - c\sqrt{\tau}$, then $\g$ will approach infinity asymptotic to a ray, as shown in the right-hand example in Figure \ref{example2}.   The angle of the ray depends on $c$, and changing $\tau$ scales the picture.\\
\end{enumerate}

\noindent {\bf Remark:} The notion of self-similarity introduced in this paper is a generalization of the notion  introduced in \cite{LMR}, where we defined a  curve $\g$ in $\H$  with finite halfplane capacity  to be self-similar if $g_t(\g(t,T))$ is a translation and dilation of $\g$ for all $t \in (0,T)$.  Proposition 3.1 in \cite{LMR} states that $\g$ is self-similar (under the more restrictive definition) if and only if the driving  function is $\l(t) = k+c\sqrt{\tau -t}$ for some $c, k \in \R$ and $\tau >0$.
\cite{LMR} also contains geometric constructions for these curves, which gives a conceptual and simple way to obtain
the solutions to $\eqref{le}$ with driving function $\l(t) = k+c\sqrt{\tau-t}$.
\\

Now for the proof of Theorem \ref{CCprop}:

\begin{proof}

Assume that $\g \in S(\H,0,\infty)$.  Without loss of generality, we assume $\g$ is parametrized by halfplane capacity.
This means that 
 for each $t \in (0,T)$, there exists a conformal map 
 $ \phi_t: \H \setminus \g[0,t] \to \H$
                               with  $\phi_t(\g(t)) = 0$ and $\phi_t(\infty) = \infty$
 so that
 			$ \phi_t  (\g(t,T)) = \g.$
It follows that $ \displaystyle \phi_t =  r_t \cdot \left(  g_t  - \l(t) \right)$, 
where $r_t >0$ and $g_t$ is the solution to \eqref{le}.
Thus
$$ r_t \cdot \left(  g_t (\g(t,T)) - \l(t) \right) = \g.$$
  By the concatenation, scaling and translation properties, 
\begin{equation} \label{detS}
 r_t \cdot \left(\l \left(t+\frac{s}{r_t^2} \right) - \l(t) \right) = \l(s).
\end{equation}

Theorem \ref{thmEE} implies that $\l \in C^2(0, T)$.
Since the left-hand side of \eqref{detS} is twice differentiable for $s=0$, the same must be true for $\l(s)$, and so $\l \in C^2[0,T)$.
By taking derivatives in \eqref{detS}  with respect to $s$ and setting $s=0$, 
\begin{equation} \label{rteq}
\frac{1}{r_t} \l'(t) = \l'(0) \;\;  \text{   and   } \;\;
\frac{1}{r^3_t} \l''(t)  = \l''(0). 
\end{equation}
If $\l'(0) = 0$, then $\l'(t) = 0$ for all $t$, and hence $\l(t) =0$.
If $\l''(0)=0$, then $\l''(t) = 0$ for all $t$ and $\l(t) = ct$ for $c \in \mathbb{R}$.  
If $\l'(0) \neq 0$ and $\l''(0) \neq 0$, then $\l'(t)$ and $\l''(t)$ are nonzero for all $t$, and \eqref{rteq} gives
\begin{equation} \label{Aeq}
\frac{\l'(t)^3}{\l''(t)} = \frac{\l'(0)^3}{\l''(0)}.
\end{equation}
Setting $A= \frac{\l'(0)^3}{\l''(0)}$ and solving \eqref{Aeq} gives
$$\l'(t) = \pm \left( B -  \frac{2}{A} t \right)^{-1/2} \;\; \text{ and } \;\; 
\l(t) = \pm A \sqrt{B -  \frac{2}{A} t} \, \mp A\sqrt{B} .$$
Thus, there exists $c \neq 0$ and $\tau >0$ so that  $\l(t) = c\sqrt{\tau} - c\sqrt{\tau-t}$ (when $A>0$) or
 $\l(t) = c\sqrt{\tau+t} - c\sqrt{\tau}$ (when $A<0$.)
 
Conversely,  if $\l$ is one of the driving functions $0, ct, \,  c\sqrt{\tau} - c\sqrt{\tau-t}, \, $  or $c\sqrt{\tau + t} - c\sqrt{\tau},$
 then $\l$ will satisfy \eqref{detS}, and this implies that $\g \in S(\H, 0, \infty)$.
\end{proof}

\begin{prop} \label{SeriesProp}
Assume that $\l \in C^2[0,T)$ is the driving function for a  curve $\g$.  Further, we assume that $\l(0) = 0$ and either $\l'(0) \neq 0$ or both $\l'(0)$ and $\l''(0)$ are 0.  Then in the halfplane-capacity parametrization, $\g$ satisfies
\begin{equation} \label{gseries}
\g(t) =  2i\sqrt{t} + a \, t - i \frac{a^2}{8}\, t^{3/2} + b\, t^2 + o( t^2 )
\end{equation}
for $t$ near 0, 
where $a = \frac{2}{3} \l'(0)$ and $b = \frac{4}{15} \l''(0) + \frac{1}{135} \l'(0)^3$.

\end{prop}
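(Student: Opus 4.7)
The plan is to translate the claim into a question about a singular ODE via the reverse Loewner flow. For small $t>0$ set $w_s := g_{t-s}(\g(t))$ for $s \in [0,t]$, so that $w_0 = \l(t)$, $w_t = \g(t)$, and, differentiating in $s$ and using \eqref{le},
$$\frac{dw_s}{ds} = -\frac{2}{w_s - \l(t-s)}.$$
The shift $u_s := w_s - \l(t-s)$, combined with $\l(0)=0$, converts this into the singular initial-value problem
\begin{equation*}
u'_s = -\frac{2}{u_s} + \l'(t-s), \qquad u_0 = 0, \qquad u_t = \g(t).
\end{equation*}

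The leading balance $u_s \sim 2i\sqrt{s}$ (the unique branch sending the flow into $\H$) motivates the ansatz $u_s = 2i\sqrt{s}(1+v(s))$. Substituting and multiplying through by $-i\sqrt{s}$ rewrites the ODE as
$$2v - v^2 + v^3 - \cdots + 2s\,v' = -i\sqrt{s}\,\l'(t-s).$$
I would then expand $v = \sum_{k\ge 1} v_k(t)\, s^{k/2}$ together with $\l'(t-s) = \l'(t) - s\l''(t) + o(s)$, and match the coefficients of $s^{1/2}, s, s^{3/2}$ in turn. A short computation yields $v_1 = -i\l'(t)/3$, $v_2 = -\l'(t)^2/36$, and $v_3 = i\l''(t)/5 - i\l'(t)^3/270$. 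Setting $s=t$ gives $\g(t) = 2i\sqrt{t} + \sum_{k\ge1} 2iv_k(t)\,t^{(k+1)/2}$; expanding $\l'(t) = \l'(0)+t\l''(0)+o(t)$ inside each $v_k$ and regrouping by powers of $t$ produces \eqref{gseries}, where the $\l''(0)$-coefficient in $b$ emerges as $\tfrac{2}{3}-\tfrac{2}{5} = \tfrac{4}{15}$ from combining the Taylor correction to $v_1$ with the leading term of $v_3$, and the $-i a^2 t^{3/2}/8$ term is just $2iv_2(0)t^{3/2}$.

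The principal obstacle is the rigorous $o(t^2)$ error bound. Under only $\l \in C^2$, the curve $\g$ need not be twice differentiable at $0$, so the expansion cannot be read off from a Taylor series of $\g$; the asymptotic has to be derived from the ODE itself. What is needed is a Gronwall-type estimate on the residual $r_s := u_s - u_s^{\mathrm{ansatz}}$ on $[0,t]$, using that once $|u_s|\gtrsim\sqrt{s}$ the nonlinear term $-2/u_s$ is well-behaved, so the error is driven by the smallness of the remainder $\l'(t-s) - [\l'(t)-s\l''(t)] = o(s)$, uniformly as $t\downarrow 0$. The hypothesis on $(\l'(0),\l''(0))$ is used here to exclude a borderline regime in which subleading driving terms could compete with the $2i\sqrt{s}$ balance; in the admitted regimes, the formal ansatz is genuinely asymptotic through order $t^2$.
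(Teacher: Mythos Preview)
Your formal computation is correct and your approach---direct analysis of the reverse Loewner flow, ansatz $u_s = 2i\sqrt{s}(1+v(s))$, and coefficient matching---is genuinely different from the paper's. The paper does not touch the singular ODE directly. Instead it (i) verifies \eqref{gseries} by hand for the four explicit self-similar driving terms $0$, $ct$, $c\sqrt{\tau}-c\sqrt{\tau-t}$, $c\sqrt{\tau+t}-c\sqrt{\tau}$, where the Loewner equation can be solved implicitly and the trace expanded in powers of $\sqrt{t}$; and (ii) for general $\l$, picks the self-similar $\l_*$ with $\l_*'(0)=\l'(0)$ and $\l_*''(0)=\l''(0)$ and invokes Wong's continuity estimate $|\g(t)-\g_*(t)|\le C\sup_{[0,\epsilon]}|\l-\l_*|$ to transfer the expansion, since $\l-\l_* = o(t^2)$. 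This makes the role of the hypothesis on $(\l'(0),\l''(0))$ transparent: when $\l'(0)=0$ but $\l''(0)\neq 0$ there is no self-similar comparison curve with matching first two derivatives, so step (ii) is unavailable. Your explanation of the hypothesis as excluding a ``borderline regime'' in the ODE balance is vague; in fact your formal series goes through in that case too, so if the hypothesis is genuinely needed in your route it would have to enter in the error analysis, and you have not identified where.

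That brings us to the real gap. You correctly flag the $o(t^2)$ bound as the principal obstacle, but you do not carry it out---you only say a Gronwall-type argument on the residual ``is needed.'' This is the entire analytic content of the proposition; the formal matching is routine. Controlling the residual near the singularity $s=0$ under only $\l\in C^2$ is delicate: the linearized equation for $r_s$ has a coefficient of size $s^{-1}$, so a naive Gronwall gives useless polynomial growth, and one needs to exploit the specific sign/structure (or work with $r_s/\sqrt{s}$ and use that the forcing is $o(s)$ uniformly). This can be done, but it is not a one-liner, and as written your argument is a heuristic rather than a proof. The paper sidesteps exactly this difficulty by outsourcing it to Wong's theorem.
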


\begin{proof}

This proof needs two components:~computations and Theorem 3.3 in \cite{W}.
For the computations, we need to show that the curves generated by driving functions 
$ 0, \;  ct, \;   c\sqrt{\tau} - c\sqrt{\tau-t},$ and $ c\sqrt{\tau + t} - c\sqrt{\tau}$ all satisfy \eqref{gseries}.  
These computations are straightforward.  As done in \cite{KNK}, one can solve \eqref{le} for the driving functions in our list, obtaining an implicit equation for $\g(t)$.  Then we simply expand in a power series to obtain \eqref{gseries}.

For the second step, choose $\l_*(t)$ to be the driving function in this list that satisfies
$\l'(0) = \l_*'(0)$ and $\l''(0) = \l_*''(0)$.  Let $\g_*$ be the  curve generated by $\l_*$.
Then Theorem 3.3 in \cite{W} implies that for $t \in [0, \epsilon]$,
$$|\g(t) - \g_*(t)| \leq C\, \sup_{0\leq t \leq \epsilon} |\l(t)-\l_*(t)|$$
for small enough $\epsilon$.  This in turn implies \eqref{gseries}.

\end{proof}

The previous proposition shows that the two parameters $\l'(0)$ and $\l''(0)$ determine the Loewner trace up to 4th order in $t^{1/2}$ (excluding the case  $\l'(0) = 0$ and $\l''(0) \neq 0$.)  
These two parameters also uniquely determine a self-similar curve in $S(\H, 0, \infty)$ by Theorem \ref{CCprop}. 
As we see in the next proposition, this allows us to match a given curve with a unique ``best-fitting" self-similar curve.
We say $\g_* \in S(\O, a, b)$ is the best-fitting curve to $\g: (0,T) \to (\O, a, b)$ at $a$ 
  if under the conformal transformation $\psi: (\O, a, b) \to (\H, 0, \infty),$ the halfplane-capacity parametrization of
$\psi(\g_*)$ and $\psi(\g)$ match up to 4th order (or satisfy $\l'(0) = \l_*'(0)=0$.)

\begin{prop}
\label{CompPrin}
Let $\g:(0, T) \to (\O, a, b)$ be $C^3$.
At each point $\g(t)$, the curve $\g(t, T)$ has
 a unique best-fitting curve $\g_* \in S(\O \setminus \g[0,t], \g(t), b)$.
\end{prop}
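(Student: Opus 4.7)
The plan is to transport the problem to the half-plane and then combine Theorem \ref{CCprop}, which classifies self-similar driving functions, with Proposition \ref{SeriesProp}, which says that matching $(\l'(0),\l''(0))$ suffices to match a curve with its best fit through order $t^2$. First I would fix a conformal isomorphism $\psi_t:(\O\setminus\g[0,t],\g(t),b)\to(\H,0,\infty)$, normalised so that $\wt\g:=\psi_t(\g(t,T))$ is parametrised by half-plane capacity. Since $S(\cdot)$ and the notion of best-fitting curve were both defined conformally, it suffices to exhibit a unique best-fit $\wt\g_*\in S(\H,0,\infty)$ for $\wt\g$; the required $\g_*:=\psi_t^{-1}(\wt\g_*)$ is then automatic. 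Because $\g\in C^3$, Theorem \ref{thmEE} gives $\l\in C^2$, and the concatenation property identifies the driving function of $\wt\g$ as $\wt\l(s)=\l(t+s)-\l(t)$, so that $\wt\l(0)=0$, $\wt\l'(0)=\l'(t)$ and $\wt\l''(0)=\l''(t)$.

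The core step is to check that the map $\l_*\mapsto(\l_*'(0),\l_*''(0))$ is a bijection from the driving functions listed in \eqref{drivingfcns} onto $\R^2\setminus\{(0,q):q\neq 0\}$. I would go through the four families one by one: the zero function hits $(0,0)$; the linear $\l_*(t)=ct$ with $c\neq 0$ hits $(c,0)$; for $\l_*(t)=c\sqrt{\tau}-c\sqrt{\tau-t}$ one has $\l_*'(0)=c/(2\sqrt{\tau})$ and $\l_*''(0)=c/(4\tau^{3/2})$, covering the pairs $(p,q)$ with $pq>0$ bijectively via $\tau=p/(2q)$ and $c=2p\sqrt{\tau}$; the fourth family covers the pairs with $pq<0$ by a symmetric computation. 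The pairs $(0,q)$ with $q\neq 0$ are not hit, but they are also excluded from the hypotheses of Proposition \ref{SeriesProp} and from the first clause of the best-fit definition, so they do not cause trouble.

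With the bijection in hand, the argument finishes as follows. If $\l'(t)\neq 0$, pick the unique $\l_*$ from the list with $(\l_*'(0),\l_*''(0))=(\l'(t),\l''(t))$; Proposition \ref{SeriesProp} then guarantees that $\wt\g_*$ and $\wt\g$ agree through $t^2$, giving existence, and uniqueness follows because any best-fit would have to match both derivatives. If $\l'(t)=0$, the best-fit definition demands only $\l_*'(0)=0$, which by the bijection forces $\l_*\equiv 0$ and $\wt\g_*$ to be the vertical ray, again uniquely. The hard part will be the bijection verification in the middle paragraph; the rest is straightforward bookkeeping, provided the $\l'(t)=0$ case is handled separately so as to invoke the alternative clause in the best-fit definition.
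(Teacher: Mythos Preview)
Your proposal is correct and follows essentially the same route as the paper: reduce conformally to $(\H,0,\infty)$, use Theorem~\ref{thmEE} to get $\l\in C^2$, read off $(\l'(t),\l''(t))$ via concatenation, and then match these data to a unique member of the list in \eqref{drivingfcns}, treating the case $\l'(t)=0$ separately via the alternative clause in the best-fit definition. The only substantive addition is that you explicitly verify the bijection $\l_*\mapsto(\l_*'(0),\l_*''(0))$ onto $\R^2\setminus\{(0,q):q\neq0\}$, which the paper's proof invokes with the phrase ``choose the unique $\g_*\in S(\H,0,\infty)$ satisfying $\l'(t)=\l_*'(0)$ and $\l''(t)=\l_*''(0)$'' but does not spell out.
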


\begin{figure}[h]
\centering
\includegraphics[scale=0.5]{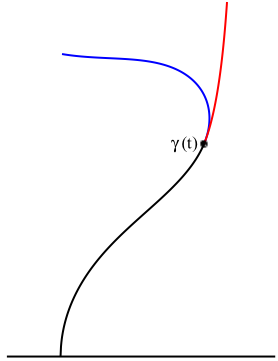}
\caption{Illustration of Proposition \ref{CompPrin} : $\g(0,t)$ shown in black, $\g(t,T)$  in blue and $\g_*$  in red.  }
\label{LCdefpic}
\end{figure}

\begin{proof}
By conformal transformation, it suffices to consider the situation when $\O = \H$, $a=0$ and $b= \infty$.   By Theorem \ref{thmEE}, since $\g$ is $C^3$, we can deduce that both $\l$ and the halfplane-capacity parametrization of $\g$  are in $C^2(0,\tau)$ (where  $\tau$ is the halfplane capacity of $\g$ and may be infinite.)  For the rest of the proof, we assume $\g$ is parametrized by halfplane capacity and $T=\tau$.

For $t \in (0,T)$, $\l'(t)$ and $\l''(t)$ are defined.  If $\l'(t) = 0$, we set $\g_*(t) = 2i\sqrt{t}$.  Otherwise, we choose the unique $\g_* \in S(\H, 0, \infty)$ satisfying $\l'(t) = \l_*'(0)$ and $\l''(t) = \l_*''(0)$.  
Then by Proposition \ref{SeriesProp} and Theorem \ref{CCprop}, $\g_*$ is the unique best-fitting curve to $g_t(\g(t, T)) -\l(t)$ at the origin.
Thus $g_t^{-1} (\g_*) \in S(\H \setminus \g[0,t], \g(t), \infty)$ is the unique best-fitting curve to $\g(t,T)$ at the point $\g(t)$.

\end{proof}

\section{Loewner curvature}\label{Lcurvedef}

\subsection{The definition of Loewner curvature}

We are now ready to give the definition of Loewner curvature, notated $LC_{\g}(t)$.  We begin with assigning constant curvature for $\g \in S(\H, 0, \infty)$ as follows:
\begin{defn} \label{SSLCdef}
Let $\g \in S(\H, 0, \infty)$. Then $LC_{\g}$, the Loewner curvature of $\g$, is defined to be the following constant:
\begin{enumerate}
\item   If $\g$ is generated by $ 0$, then $LC_{\g} \equiv 0$.
\item   If $\g$ is generated by $ ct$, then $LC_{\g} \equiv \infty$.
\item   If $\g$ is generated by $ c\sqrt{\tau} - c\sqrt{\tau-t}$, then  $LC_{\g} \equiv c^2/2$.
\item   If $\g$ is generated by $ c\sqrt{\tau + t} - c\sqrt{\tau}$, then $LC_{\g} \equiv -c^2/2$.
\end{enumerate}
\end{defn}
Note that for $\g \in S(\H, 0, \infty)$,  the Loewner curvature is scaling invariant by definition.
For the self-similar curves $\g \in S(\O, a, b)$, $LC_{\g}(t)$  is defined to satisfy  conformal invariance. 
For a $C^3$ curve $\g:(0,T) \to (\O, a, b)$, we  
define $LC_{\g}(t)$  by comparison to the curves of constant curvature 
as follows:
\begin{defn}\label{LCdef}
Let  $\g:(0,T) \to (\O, a, b)$ be $C^3$.
The Loewner curvature of $\g$ at the point $\g(t)$, notated $LC_{\g}(t)$, is defined to be 
$ LC_{\g_*}$, where
$\g_* \in S(\O \setminus \g(0,t], \g(t), b)$ is the unique best-fitting curve to $\g(t,T)$ at $\g(t)$. 
\end{defn}
\noindent Proposition \ref{CompPrin} (illustrated in Figure \ref{LCdefpic}) guarantees that  $\g_*$  exists.

Although our approach to defining Loewner curvature via comparison curves is natural, it is difficult to use this definition to compute $LC_{\g}(t)$.  
The following lemma shows that computations are straightforward  in the Loewner framework. 
 (The nice formula that appears in this lemma also explains the particular definition for $LC_{\g}$ 
 in cases 3 and 4 of Definition \ref{SSLCdef}.)

\begin{prop} \label{curvProp}
Let $\g:(0,T) \to (\H, 0, \infty)$ be a $C^3$ curve that is parametrized by halfplane capacity, and let $\l$ be the corresponding driving function.  
Then for $t \in (0,T)$,  
\begin{equation} \label{curv}
LC_{\g}(t) = \frac{\l'(t)^3}{\l''(t)}.
\end{equation}
\end{prop}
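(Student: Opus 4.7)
The plan is to deduce the formula directly from the construction of the best-fitting self-similar curve given in the proof of Proposition \ref{CompPrin}, and then check the four driving-function families of Theorem \ref{CCprop} case by case.

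By Definition \ref{LCdef}, $LC_\gamma(t) = LC_{\gamma_*}$, where $\gamma_* \in S(\H \setminus \gamma[0,t], \gamma(t), \infty)$ is the best-fitting self-similar curve at $\gamma(t)$. The proof of Proposition \ref{CompPrin} constructs $\gamma_*$ as $g_t^{-1}(\widetilde\gamma_*)$ (shifted by $\lambda(t)$), where $\widetilde\gamma_* \in S(\H,0,\infty)$ is the unique self-similar curve whose driving function $\lambda_*$ satisfies $\lambda_*'(0) = \lambda'(t)$ and $\lambda_*''(0) = \lambda''(t)$ (or $\widetilde\gamma_*(s)=2i\sqrt{s}$ in the degenerate case $\lambda'(t)=0$). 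By the conformal invariance built into $LC$ immediately after Definition \ref{SSLCdef}, $LC_{\gamma_*} = LC_{\widetilde\gamma_*}$. So it suffices to verify
\begin{equation*}
LC_{\widetilde\gamma_*} \;=\; \frac{\lambda_*'(0)^3}{\lambda_*''(0)}
\end{equation*}
for every $\widetilde\gamma_* \in S(\H,0,\infty)$.

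By Theorem \ref{CCprop}, $\lambda_*$ is one of the four forms in \eqref{drivingfcns}, so I would simply compute $\lambda_*'(0)$ and $\lambda_*''(0)$ in each case and compare against Definition \ref{SSLCdef}. For $\lambda_*(s) = c\sqrt{\tau} - c\sqrt{\tau-s}$ one gets $\lambda_*'(0) = c/(2\sqrt{\tau})$ and $\lambda_*''(0) = c/(4\tau^{3/2})$, hence $\lambda_*'(0)^3/\lambda_*''(0) = c^2/2$, matching case 3 of Definition \ref{SSLCdef}. For $\lambda_*(s) = c\sqrt{\tau+s} - c\sqrt{\tau}$ the signs on $\lambda_*''$ flip, yielding $-c^2/2$, matching case 4. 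The degenerate cases $\lambda_* \equiv 0$ (giving $0$) and $\lambda_*(s)=cs$ (giving $\infty$) match cases 1 and 2 under the natural conventions $0/0=0$ and $c^3/0 = \infty$. Finally, since $\lambda_*'(0) = \lambda'(t)$ and $\lambda_*''(0) = \lambda''(t)$, the identity becomes $LC_\gamma(t) = \lambda'(t)^3/\lambda''(t)$, which is \eqref{curv}.

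I do not foresee any genuine obstacle here: the constants $\pm c^2/2$ in Definition \ref{SSLCdef} were chosen precisely so that the compact expression $\lambda'^3/\lambda''$ falls out, and the reduction to $\widetilde\gamma_*$ in $(\H,0,\infty)$ coordinates is already furnished by Proposition \ref{CompPrin}. The only thing that requires a moment's thought is ensuring that the various degenerate cases ($\lambda'(t)=0$, $\lambda''(t)=0$) are consistently interpreted, but each is handled cleanly by the corresponding branch of Definition \ref{SSLCdef}.
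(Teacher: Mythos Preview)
Your proof is correct and follows essentially the same route as the paper: reduce to the self-similar comparison curve $\widetilde\gamma_*\in S(\H,0,\infty)$ via conformal invariance and the construction in Proposition~\ref{CompPrin}, verify $LC_{\widetilde\gamma_*}=\lambda_*'(0)^3/\lambda_*''(0)$ on the four families of Definition~\ref{SSLCdef}, and then use $\lambda_*'(0)=\lambda'(t)$, $\lambda_*''(0)=\lambda''(t)$. The paper's proof is nearly identical, only compressing your case-by-case check into the single remark that \eqref{curv} holds for $\gamma\in S(\H,0,\infty)$ by Definition~\ref{SSLCdef}.
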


Since $LC_\g(t) = 0$ precisely when $\l'(t) = 0$, if the right-hand side of \eqref{curv} is in the indeterminate form $``\,\frac{0}{0}\, "$, we declare it to equal 0.

\begin{proof}
By Definition \ref{SSLCdef},  equation \eqref{curv} is true for $\g \in S(\H, 0, \infty)$.
For $t \in (0, T)$, let $\g_*  \in S(\H \setminus \g(0,t], \g(t), \infty)$ be the unique best-fitting curve to $\g(t,T)$ at $\g(t)$.  Suppose that $\l_*$ generates $g_t (\g_*) - \l(t)$, which is the unique best-fitting curve in $  S(\H, 0, \infty)$ to $g_t(\g(t, T))-\l(t)$ at 0. 
Then
$$LC_{\g}(t) = LC_{\g_*}= \frac{\l_*'(0)^3}{\l_*''(0)}= \frac{\l'(t)^3}{\l''(t)}.$$
The last equality is due to Proposition \ref{SeriesProp}. 

\end{proof}

The next theorem says that if the curvature is small, then $\g$ cannot ``curve" enough to hit itself or the boundary (except at the marked point $b$.)

\begin{thm}  \label{simpleThm}
Let $\g:(0,T) \to (\O,a,b)$ be $C^3$.  
If $ LC_{\g}(t) < 8$  for all $t \in (0,T)$,  
then $\g(0,T]$ is a simple curve in $\O \cup \{b\}$.
\end{thm}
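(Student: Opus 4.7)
The plan is to reduce the problem to the coordinate case $(\Omega, a, b) = (\H, 0, \infty)$ by conformal invariance. In that setting Theorem \ref{thmEE} ensures the driving function $\lambda$ is $C^2(0,T)$, and Proposition \ref{curvProp} translates the hypothesis into the pointwise differential inequality $\lambda'(t)^3/\lambda''(t) < 8$.

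The central step is a sharp ODE estimate on intervals where $\lambda$ is convex in its direction of motion. On any maximal open subinterval $I$ where $\lambda' > 0$ and $\lambda'' > 0$ (and symmetrically, using the reflection property of Section 3.1, when both are negative), set $h = \lambda'$; the hypothesis rewrites as $h' > h^3/8$, equivalently $(h^{-2})' < -1/4$. Integrating gives a blow-up time $T_* \ge \sup I$ for $h$, with the strict bound $h(u) < 2/\sqrt{T_* - u}$; integrating a second time yields
\[
\lambda(t) - \lambda(s) < 4\bigl(\sqrt{T_* - s} - \sqrt{T_* - t}\bigr) \le 4\sqrt{t - s}
\]
for all $s < t$ in $I$. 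This matches the critical $1/2$-Hölder threshold used in \cite{Li, RTZ}, and the constant $4$ corresponds exactly to the self-similar family $c\sqrt{\tau} - c\sqrt{\tau - t}$ at $|c| = 4$, i.e., $LC = 8$.

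From there I would argue by contradiction. Suppose $\gamma$ is not simple in $\Omega \cup \{b\}$ and let $t_0 \in (0, T]$ be the first time of failure (either a self-intersection or a touch of $\partial\Omega \setminus \{b\}$). For $s \in (0, t_0)$, the concatenation property of the chordal Loewner equation yields the mapped-out curve $\tilde\gamma_s(u) = g_s(\gamma(s+u)) - \lambda(s)$ with driving $\tilde\lambda_s(u) = \lambda(s+u) - \lambda(s)$; this curve inherits the pointwise bound $LC < 8$ and, by Proposition \ref{SeriesProp}, has expansion $\tilde\gamma_s(u) = 2i\sqrt{u} + O(u)$ at the origin. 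Combining the sharp convex-regime bound above with the fact that on the remaining intervals (where $\lambda'$ and $\lambda''$ have opposite signs) the driving is concave in its direction of motion and so is turning the trace back on itself rather than closing it off, I would deduce that $\tilde\gamma_s$ must stay in $\H$ and cannot self-intersect on the time interval $[0,t_0-s]$, contradicting the choice of $t_0$.

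The main obstacle is the gluing of these regime-by-regime estimates across transitions where $\lambda'$ or $\lambda''$ changes sign: the condition $LC_\gamma(t) < 8$ is strict pointwise but not uniform, so to run the comparison with the critical $|c|=4$ self-similar curve one must arrange a strict Hölder-type control of the form $|\lambda(t) - \lambda(s)| < 4\sqrt{t-s}$ on a shrinking neighborhood of the hypothetical failure time $t_0$. Managing this requires the full $C^2$ regularity of $\lambda$ from Theorem \ref{thmEE} and a careful decomposition of $(0, t_0)$ into monotone segments of $\lambda'$; the constant $8$ is sharp precisely because any relaxation would admit a critical self-similar comparison curve that actually reaches $\R$.
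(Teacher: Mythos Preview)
Your ODE estimate on the ``convex'' intervals is correct and is precisely the content of Lemma~\ref{lipLemma}: from $h' > h^3/8$ one obtains $h(u) < 2/\sqrt{T_* - u}$ and hence the Lip$(1/2)$ bound $|\lambda(t) - \lambda(s)| < 4\sqrt{t-s}$ on any interval where $\lambda'$ and $\lambda''$ share a sign. The genuine gap is exactly what you yourself flag as ``the main obstacle,'' namely the gluing across sign changes; and your treatment of the opposite-sign regime (``turning the trace back on itself rather than closing it off'') is a geometric heuristic, not an argument.

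The insight you are missing, and which dissolves the obstacle entirely, is that \emph{there are no sign changes to glue across}. If $\lambda'(t_0)^3/\lambda''(t_0) > 0$ at some $t_0 \in (0,T)$, then on any interval where $\lambda'$ and $\lambda''$ share a sign, $|\lambda'|$ is increasing, and the hypothesis forces $|\lambda''| \ge |\lambda'|^3/8 \ge |\lambda'(t_0)|^3/8 > 0$; thus neither $\lambda'$ nor $\lambda''$ can vanish, and the convex regime persists on all of $(t_0, T)$. Your H\"older bound therefore covers the entire tail, and one invokes \cite{Li} directly on $g_{t_0}(\gamma(t_0,T])$. The complementary case is that $LC_\gamma(t) \le 0$ for \emph{every} $t \in (0,T)$; then $\lambda'$ and $\lambda''$ have opposite sign throughout, so $|\lambda'|$ is non-increasing and hence bounded on $[\epsilon, T)$, which yields Lip$(1/2)$ norm as small as one likes on a terminal interval, and again \cite{Li} applies. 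This clean dichotomy is the paper's proof; it replaces both your contradiction scheme with the mapped-out curves $\tilde\gamma_s$ and the regime-by-regime patching you were unable to complete.
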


The constant 8 is best possible, as the curve in case 3 of Definition  \ref{SSLCdef} with $c=4$ shows.
The following lemma is key component of the proof.

\begin{lemma} \label{lipLemma}
Let $\g:(0,T) \to (\H, 0, \infty)$ be $C^3$ with driving function $\l$.  
  If $0< LC_{\g}(t) \leq A$, then $\lambda$ is Lip$(1/2)$ with norm bounded above by $\sqrt{2A}$.

\end{lemma}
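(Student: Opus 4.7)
The plan is to translate the curvature bound into a differential inequality for $\lambda'$, integrate it to obtain a pointwise bound of the form $|\lambda'(s)| \lesssim (t-s)^{-1/2}$, and then integrate once more to get the H\"older bound.

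First, by Proposition \ref{curvProp}, the hypothesis $0 < LC_\gamma(t) \leq A$ reads $0 < \lambda'(t)^3/\lambda''(t) \leq A$ on $(0,T)$. Since Theorem \ref{thmEE} gives $\lambda \in C^2(0,T)$ and this ratio is strictly positive and continuous, neither $\lambda'$ nor $\lambda''$ vanishes on $(0,T)$, and the two share a common sign. Using the reflection symmetry of the Loewner equation (property 3 of Section \ref{Section:LE}), which replaces $\lambda$ by $-\lambda$ without changing $LC_\gamma$, I may assume throughout that $\lambda' > 0$ and $\lambda'' > 0$ on $(0,T)$.

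With this reduction the hypothesis becomes the separable differential inequality $\lambda''(t) \geq \lambda'(t)^3/A$. Setting $u = \lambda' > 0$, this is
\begin{equation*}
\frac{d}{dt}\!\left(-\frac{1}{2u(t)^2}\right) \;=\; \frac{u'(t)}{u(t)^3} \;\geq\; \frac{1}{A}.
\end{equation*}
For any $s<t$ in $(0,T)$, integrating from $s$ to $t$ gives
\begin{equation*}
\frac{1}{2u(s)^2} \;\geq\; \frac{1}{2u(t)^2} + \frac{t-s}{A} \;\geq\; \frac{t-s}{A},
\end{equation*}
which yields the pointwise control
\begin{equation*}
0 \;<\; \lambda'(s) \;\leq\; \sqrt{\frac{A}{2(t-s)}}.
\end{equation*}

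Finally, fix any $s<t$ in $(0,T)$ and apply the bound above at each $r \in (s,t)$ with $t$ in the role of upper endpoint:
\begin{equation*}
|\lambda(t)-\lambda(s)| \;=\; \int_s^t \lambda'(r)\,dr \;\leq\; \int_s^t \sqrt{\frac{A}{2(t-r)}}\,dr \;=\; \sqrt{2A(t-s)},
\end{equation*}
which is the claimed bound $\|\lambda\|_{1/2} \leq \sqrt{2A}$ on $(0,T)$, extending by continuity of $\lambda$ to the closure.

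The only real subtlety is the preliminary sign analysis: positivity of $LC_\gamma$ is exactly what forces $\lambda'$ to have constant non-zero sign so that the reciprocal $1/u^2$ is a legitimate antiderivative; without this, separation of variables could blow up through $u=0$. After that reduction, the argument is just one integration of a separable ODE inequality, followed by a second integration against an integrable singularity.
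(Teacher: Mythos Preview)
Your proof is correct and follows the same overall strategy as the paper's: reduce to the differential inequality $\lambda'' \geq (\lambda')^3/A$ with $\lambda'>0$, deduce the pointwise bound $\lambda'(s)\leq \sqrt{A/(2(t-s))}$ by comparison with the extremal ODE $\sigma'=\sigma^3/A$, and then integrate to get the $\mathrm{Lip}(1/2)$ estimate.

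The difference lies in how the pointwise bound is obtained. The paper fixes a finite $T$, sets $\sigma_{A,T}(t)=\sqrt{A/2}\,(T-t)^{-1/2}$, and argues by contradiction: assuming $\sigma(s_0)>\sigma_{A,T}(s_0)$, it builds an explicit increasing sequence $s_n\to\tau<T$ along which $\sigma(s_n)\to\infty$ via an inductive estimate, contradicting $\sigma\in C^1(0,T)$. Your route is more direct and more elementary: you simply integrate the exact identity $(-1/(2u^2))' = u'/u^3 \geq 1/A$ between $s$ and an arbitrary $t>s$, which immediately yields $u(s)\leq \sqrt{A/(2(t-s))}$ without any contradiction or sequence construction, and without needing to assume $T<\infty$. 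The paper's argument effectively rediscovers finite-time blow-up for the supersolution, whereas you extract the comparison in one line. Both give the identical constant $\sqrt{2A}$; your version is shorter and makes the role of separability more transparent.
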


\begin{proof}

Since $\g$ is $C^3$, Theorem \ref{thmEE} implies that  $\l \in C^2(0,T)$.  
By Proposition \ref{curvProp}, 
 $0 < \lambda'(t)^3 / \lambda''(t) \leq A$ for all $t \in (0,T)$.   
 Without loss of generality, we assume that $T$ is finite, and
we will show that $|\lambda(t)-\lambda(s)| \leq \sqrt{2A} \sqrt{|t-s|}$ for $t,s \in [0,T]$.

Set $\sigma=\lambda'$, and  assume  that $\sigma > 0$  (if not, replace $\sigma$ with $-\sigma$.) 
  Then
\begin{equation}\label{sigmaequ}
   0< \sigma^3 \leq A\sigma'.
\end{equation}
The solution to  $\sigma^3=A\sigma'$ is
$$\sigma_{A,B}(t)= \sqrt{A/2}(B-t)^{-1/2}$$
for some $B>0$. 
Assume for the moment that $\sigma \leq \sigma_{A,T}$, and
let $0 \leq s < t \leq T$.  Then
$$\lambda(t)-\lambda(s) = \int_s^t \sigma(u) \, du \leq \int_s^t \sigma_{A,T}(u) \, du \leq \sqrt{2A}\sqrt{t-s}.$$

It remains to show $\sigma(t) \leq \sigma_{A,T}(t)$ for all $t \in (0,T)$.  
Suppose to the contrary that there is a time $s_0<T$ so that $\sigma(s_0) > \sigma_{A,T}(s_0)$. 
This will imply, as we show in the remainder of the proof,  that there is a time $\tau < T$ with $\sigma(\tau) = \infty$, contradicting the fact that $\sigma \in C^1(0,T)$.

For simplicity, assume $s_0 = 0$ (by shifting time if necessary), and set 
\begin{equation} \label{alphaequ}
	\alpha = \sigma(0)/\sigma_{A,T}(0) >1.
\end{equation}
Let $N$ be large enough so that 
$$N^2 \sum_{n=N}^{\infty} \frac{1}{n^3} < \frac{\alpha^2}{2}.$$
This is possible since $\alpha>1$ and the limit as $N \to \infty$ of the left hand side is $1/2$.
Recursively define an increasing
sequence $\{s_n\}$ by
$$s_{n+1} = s_{n} + \frac{2T}{\alpha^2} \frac{N^2}{(N+n)^3}.$$
Finally set $\displaystyle \tau = \lim_{n \to \infty} s_n$, which satisfies
$$\tau = \sum_{n=0}^{\infty} \left( s_{n+1} - s_{n} \right) = \frac{2T}{\alpha^2} N^2 \sum_{n=0}^\infty \frac{1}{(N+n)^3} < T.$$

We will show by induction that
\begin{equation}\label{inductgoal}
 \sigma(s_n) \geq \alpha \sqrt{\frac{A}{2T}} \, \frac{N+n}{N}.
\end{equation}
Rewriting \eqref{alphaequ} gives $\sigma(0) = \alpha \sqrt{A/2T\,}$, which is the base case.
Assume that \eqref{inductgoal} holds for some fixed $n$. 
By \eqref{sigmaequ}, $\sigma$ is increasing and $\sigma' \geq \sigma^3/A$.  
Thus,
\begin{align*}
\sigma(s_{n+1})
 &= \sigma(s_n) + \int_{s_n}^{s_{n+1}} \sigma'(t) \,dt \\
		&\geq \sigma(s_n) + \frac{1}{A} \sigma(s_n)^3 \,\left( s_{n+1} - s_n \right)  \\
		&\geq  \alpha \sqrt{\frac{A}{2T}} \,\frac{N+n}{N}\left( 1 + \frac{1}{A} 
			\left( \alpha \sqrt{\frac{A}{2T}} \, \frac{N+n}{N}\right)^2 \,
			 \frac{2T}{\alpha^2} \frac{N^2}{(N+n)^3}  \right) \\
		&=  \alpha \sqrt{\frac{A}{2T}}  \frac{N+n+1}{N}.
\end{align*}
Thus \eqref{inductgoal} is true for all $n$, showing that $\sigma(\tau) = \infty$.
\end{proof}

Now the proof of Theorem \ref{simpleThm}:

\begin{proof}

It suffices to prove Theorem \ref{simpleThm} for the triple $(\H, 0, \infty)$.  Let $\l$ be the driving function for $\g$, in which case Proposition \ref{curvProp} implies that $LC_{\g}(t) = \l'(t)^3/\l''(t)$.  
Thus  $\l'(t)^3/\l''(t) \leq 8$ for all $t \in (0,T)$.

We split this proof into two cases.  For the first case, suppose that there exists a time $t_0$ so that $\l'(t_0)^3/\l''(t_0)>0$.  
We claim that $\l'(t)^3/\l''(t) \in (0, \infty)$ for all $t \in (t_0, T)$.  
If this claim is not true, then there must be a first time $t_1 \in (t_0, T)$ so that $\l'(t_1)^3/\l''(t_1)\notin (0, \infty)$.
This implies that either $\l'(t_1) = 0$ or $\l''(t_1)=0$.
However, for $t \in [t_0, t_1)$,
$\,\l'(t)$ and $\l''(t)$ must have the same sign, implying that $0 < |\l'(t_0)|^3 \leq |\l'(t)|^3 \leq 8 |\l''(t)|$.  
Hence, $\l'(t)$ and $\l''(t)$ are bounded away from zero on $[t_0, t_1]$, proving the claim.
Now in this case, Lemma \ref{lipLemma} implies that $\l$ is Lip$(1/2)$ on $[t_0, T]$ with norm strictly less than 4.  
Thus by Theorem 2 in \cite{Li},  $g_{t_0}(\g(t_0,T])$ is a simple curve in $\H \cup \{ \infty \}$.
  Therefore $\g(0,T]$ is simple in $\H \cup \{ \infty \}$.

For the next case, suppose that $\l'(t)^3/\l''(t) \in [-\infty, 0]$ for all $t \in (0,T)$. This means that $\l'$ and $\l''$ always have the opposite sign (or could be 0).  
Thus, for $\epsilon >0$, $\l'$ is bounded on $[\epsilon,T)$, which implies that $\l$ is locally Lip$(1/2)$ with small norm. 
In particular, we can find an interval $(t_0,T)$ so that $\l$ is Lip$(1/2)$ on $(t_0, T)$ with norm strictly less than 4.  We finish the proof as in the first case.

\end{proof}

 For $\g:(0,T) \to (\H, 0, \infty)$,  Loewner curvature is invariant under scaling.  
 This follows from Definitions \ref{SSLCdef} and \ref{LCdef}  and can also be verified computationally by Proposition \ref{curvProp}.
 To distinguish between different curves with the same Loewner curvature, we must choose a ``scale" 
 which we define as follows:

\begin{defn}
Assume  $\g:(0,T) \to (\H,0, \infty)$ has driving function $\l \in C^2[0,T)$ with $\l'(0)\neq 0$.
Then the scale of $\g$ at 0 is $a= \frac{2}{3} \l'(0)$, which is 
 the coefficient  of the linear term in   \eqref{gseries}.
\end{defn}

The scale of $\g$ at 0 is simply a multiple of the Euclidean curvature of $\g$ at 0.   To see this, let $\g_r$ be the upper half-circle with radius $r$, which is driven by $3r - 3\sqrt{2} \sqrt{r^2/2 - t}$. 
Thus,
Proposition \ref{SeriesProp} implies that 
$$\g_r(t) = 2i\sqrt{t} + \frac{2}{r} t +O(t^{3/2}).$$ 
Comparing this to \eqref{gseries}
shows that the half-circle that best matches the curve $\g$ at its base has radius $r=2/a = 3/\l'(0)$.
In other words, the scale of $\g$ at 0 is twice the Euclidean curvature of $\g$ at its base.
\\

\subsection{Loewner curvature is neither local, nor reversible}
\begin{figure}[h]
\centering

\begin{tikzpicture}
\draw[blue] (0,0) to [out=90,in=0] (-1,0.5) to [out=180,in=270] (-3,2)
to [in=180,out=90] (-1,4) to [out=0,in=180] (-0.5,1) to [in=270,out=0] (0,1.5) 
to [in=270,out=90] (0,6);
\draw[thick] (-5,0) -- (5,0);
\draw[fill] (0,0) circle [radius=0.025];
\node[below] at (0,0) {$0$};
\draw[fill] (0,2) circle [radius=0.025];
\node[right] at (0,2) {$2i$};
\end{tikzpicture}

\caption{A curve for which Loewner curvature is not local and not reversible. }
\label{NotLocalpic}
\end{figure}
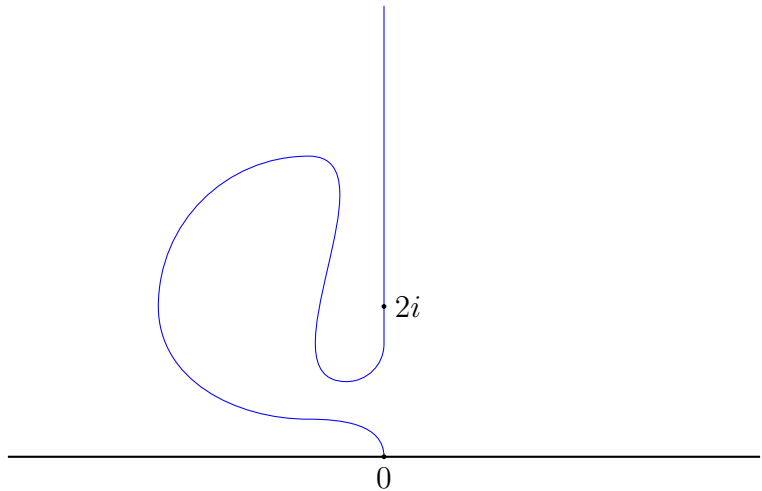

Although Loewner curvature shares some similarities with other notions of curvature, there are  significant differences.  For instance, Loewner curvature is not reversible (but depends on the orientation of the curve), and it is not local (but depends on the ``past" of the curve).  
To see why this must be true, let's assume for the moment that Loewner curvature is a purely local concept, 
and consider the smooth curve $\gamma$ shown in Figure \ref{NotLocalpic}.  Let $t_0$ be the time that $\g(t_{0}) = 2i$.
If Loewner curvature were purely local, then $LC_{\g}(t) =0$ for $t > t_{0}$, since locally near $\g(t)$ the curve $\g$ looks like the vertical slit, which has constant Loewner curvature 0.  This means that $g_{t_0}(\g)$ must be a vertical ray, and therefore $\gamma(t_0,\infty)$ would be a hyperbolic geodesic in $\H\setminus\g[0,t_0]$, which it is not.
This example also shows that Loewner curvature cannot be reversible: in contrast to the forward-direction, if we traverse $\g$ from $\infty$ to 0, then the Loewner curvature will be  0 prior to  reaching $2i$.  
\\

\subsection{Existence of a curve with given Loewner curvature}

It is natural to ask the following question:\\

\noindent {\bf Question:} Given a continuous function $l:[0,T] \to \R \cup \{ \infty \}$,
              does there exist a curve $\g:(0,T) \to (\H,0, \infty)$ with $LC_{\g}(t) = l(t) $?\\

We will need to revise this question before we can answer it in Theorem \ref{specifiedcurvature} below. 
To understand the needed revision, we consider two examples.
First suppose $l$ is a nonzero constant function. 
 Then  there are an infinite number of self-similar curves  with Loewner curvature equal to $l$.  Thus we should have the freedom to specify another parameter, for instance, the scale (or equivalently, the Euclidian curvature) of the curve at 0. 
  However, if we are in the case that the curve is driven by $\l(t) = c\sqrt{\tau} - c \sqrt{\tau -t}$, 
then   specifying the scale determines the halfplane capacity $\tau$ of the curve.  
There is a difficulty if $\tau$ is smaller than the desired value of $T$.  
Since $\l'(t) \to \infty$ as $t \to \tau$, it is impossible to continue $\l$ past $\tau$ so that $\l$ remains $C^2$.

For our second example, suppose $l(t) = (1-t)^2$ for $t \in [0,2]$, and let $a\neq0$ be the scale of the curve.  Now we wish to solve $\l'(t)^3/\l''(t) = (1-t)^2$ with $\l'(0) = 3a/2$.  
 Separation of variables leads to
 $$\l'(t)^{-2} = \frac{4}{9\,a^2} - \frac{2t}{1-t}.$$
Regardless of the value of $a$, $|\l'(t)| \to \infty$ before time 1.
 These examples show that the best we can hope for is the existence of a curve on a small time interval.

\begin{thm}\label{specifiedcurvature}
Let  $l:[0,T] \to \R\cup \{\infty\}$ be a continuous function  with $l(0) \neq 0$, and let $a \in \mathbb{R} \setminus \{ 0 \}$.
Then
 there exists $\tau>0$ and a curve $\g:(0,\tau) \to (\H,0, \infty)$ so that
$LC_{\g}(t) = l(t) $ for $t\in(0,\tau)$ and $\g$ has scale $a$ at 0.
\end{thm}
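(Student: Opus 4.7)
The plan is to use the curvature formula $LC_\g(t) = \l'(t)^3/\l''(t)$ from Proposition \ref{curvProp} to reformulate the problem as an ODE for the driving function. We seek a $C^2$ function $\l$ on $[0,\tau]$ with $\l(0) = 0$, $\l'(0) = \tfrac{3a}{2}$, and $\l'(t)^3/\l''(t) = l(t)$ on $[0,\tau]$. The condition on $\l'(0)$ is precisely what will give scale $a$ at the origin; the other two conditions ensure a curve starting at $0$ with the prescribed curvature. Setting $\sigma := \l'$, the problem reduces to the separable first-order ODE $\sigma' = \sigma^3 / l$ with $\sigma(0) = 3a/2$, where we adopt the natural convention $\sigma^3/\infty := 0$ to handle values of $l$ equal to $\infty$.

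Separation of variables integrates explicitly to
\[
\sigma(t)^{-2} \;=\; \frac{4}{9a^2} \,-\, 2\int_0^t \frac{du}{l(u)}.
\]
Since $l$ is continuous with $l(0) \neq 0$, the function $1/l$ (with $1/\infty := 0$) is continuous and bounded on a neighborhood $[0,\tau']$ of $0$. For $\tau \leq \tau'$ sufficiently small, the right-hand side above remains strictly positive, so we may extract the branch of the square root matching the sign of $a$ at $0$ to recover $\sigma \in C^1[0,\tau]$ with $\sigma(0) = 3a/2$. Integrating gives $\l \in C^2[0,\tau]$ satisfying all three of the required conditions by construction.

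It remains to pass from $\l$ to an actual simple $C^3$ curve $\g$ and then to verify $LC_\g = l$. Since $\sigma$ is continuous on $[0,\tau]$, after further shrinking $\tau$ the function $\l$ is Lipschitz with arbitrarily small constant and hence Lip$(1/2)$ with norm strictly less than $4$; Theorem~2 of \cite{Li} then produces a simple trace $\g$. To apply Proposition \ref{curvProp} and conclude $LC_\g(t) = \l'(t)^3/\l''(t) = l(t)$, we also need $\g \in C^3$. This is a converse to Theorem \ref{thmEE}; I expect it to follow from a standard bootstrap of the smoothness of $\l$ through the Loewner ODE, using the expansion of Proposition \ref{SeriesProp} to start the argument near $t=0$ and propagating interior regularity in $t$. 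Once $\g$ is known to be $C^3$, the equality $LC_\g(t) = l(t)$ and the scale $\tfrac{2}{3}\l'(0) = a$ at the origin are immediate.

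The main obstacle is the regularity step in the last paragraph: lifting a $C^2$ driving function to a $C^3$ trace. Theorem \ref{thmEE} in the paper only yields the direction trace $\Rightarrow$ driving function, so the converse must be supplied separately, most likely by a direct analysis of how smoothness of $\l$ is inherited by the conformal maps $g_t$ and then transferred to $\g(t) = g_t^{-1}(\l(t))$ through the inverse Loewner flow. The rest of the argument is a routine ODE computation and an application of the previously established simplicity criterion.
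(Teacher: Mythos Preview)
Your approach matches the paper's: the same separable ODE for $\sigma=\lambda'$, the same explicit integral formula $\sigma(t)^{-2}=\tfrac{4}{9a^2}-2\int_0^t l(s)^{-1}\,ds$, and the same concluding appeals to Propositions~\ref{curvProp} and~\ref{SeriesProp}. The paper's proof is in fact briefer than yours---it neither checks that the trace is simple nor verifies the $C^3$ hypothesis of Proposition~\ref{curvProp}; it simply lets $\gamma$ be the curve generated by $\lambda$ and applies the proposition directly. The regularity obstacle you flag (lifting $\lambda\in C^2$ to $\gamma\in C^3$) is therefore left implicit in the paper as well; the needed converse to Theorem~\ref{thmEE} is part of Wong's work~\cite{W}, which the paper already cites, so you may invoke it rather than attempt a bootstrap from scratch.
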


\begin{proof}
Since $l(0) \neq 0$, there is some interval $[0,\tau]$ so that $l(0)$ is bounded away from zero.  
Then we can solve solve 
$$\s(t) = l(t) \s'(t), \;\;\;\;\; \s(0)=3a/2$$
by separation of variables to obtain
$$\s(t)^{-2} = \frac{4}{9 \, a^2} - 2 \int_0^t \frac{1}{l(s)} \, ds.$$
By taking a smaller  value of $\tau$ if needed, we can ensure that the right-hand side is positive and bounded away from zero for all $t \in [0, \tau]$.  Therefore, we can integrate $\s(t)$ to obtain a function $\l(t)$ defined on $[0,\tau]$.
Now let $\g$ be the curve generated by $\l$ via the Loewner equation.
Proposition \ref{curvProp} implies that $LC_{\g}(t) = l(t)$, 
 and Proposition \ref{SeriesProp} implies that the scale of $\g$ at 0 is a.

\end{proof}

\section{A comparison principle}\label{curvaturebounds}

In Theorem \ref{simpleThm}, a bound on the Loewner curvature yielded global information about a curve.  The next theorem is in the same spirit.
Let us first set the stage, by 
 introducing the normalized curves of constant curvature, $\gamma^c$ and $\Gamma^c$.
For $c>0$, set 
$$\l_c(t) = c^2-c\sqrt{c^2-t} \;\; \text{ and } \;\; \Lambda_c(t)=c\sqrt{c^2+t}-c^2. $$ 
Let $\gamma^c$ be the curve  generated by $\l_c$, and 
similarly, let $\Gamma^c$ denote the curve generated by $\Lambda_c$.
Each of these curves has the same scale, since $\l_c'(0) = 1/2 = \Lambda_c'(0)$.
See Figures \ref{ccc} and \ref{cccn}.

\begin{figure}
\centering
\includegraphics[scale=0.8]{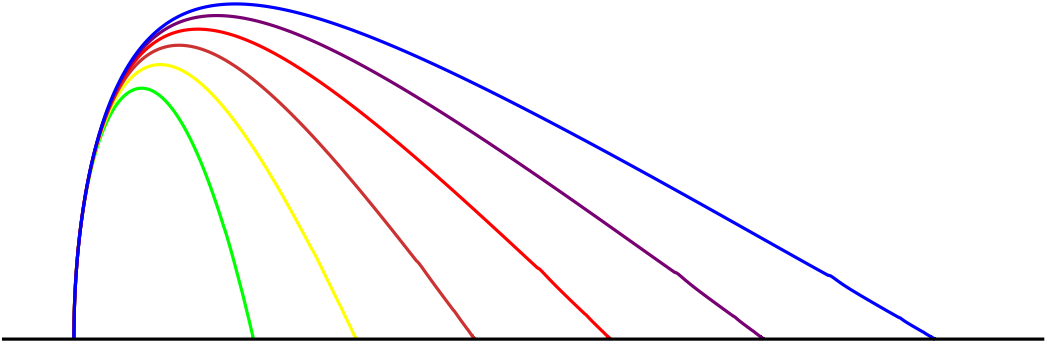}\hfill
\caption{Curves $\g^c$ generated by $c^2-c\sqrt{c^2-t}$ for $c=5,6,7,8,9,10$.   \label{ccc}
}\end{figure}

We list some additional terminology and notation that is needed:
\begin{enumerate}
\item[*] For $c \geq 4$, define $\tau_c = c^2$, and for $c < 4$, define $\tau_c$  to be the first time that the tangent vector $\g_c'(t) $ points downward.
We will often use $\g$  for $\g[0,T]$ and $\g^c$ for $\g^c[0,\tau_c]$. 
\item[*] The phrase ``the base of $\g_1$ is to the right of the base of $\g_2$" means that for $\g_1$ and $\g_2$ parametrized by height $h$, 
$\text{Re} \left( \g_1(h) \right) \geq \text{Re} \left( \g_2(h) \right)$ for small $h$. 
\item[*] The phrase ``$\g_1$ is below $\g_2$" means that the base of $\g_1$ is to the right of the base of $\g_2$ and the curves $\g_1$ and $\g_2$ never cross (although we allow them to touch.)
\end{enumerate}

\begin{thm}\label{below}
Assume $\g$ is generated by $\lambda \in C^2[0,T)$ with  $\l(0)=0$ and  $\l'(0)=1/2$.  Let $c>0$.
 \begin{enumerate}
    \item  If  $0< LC_{\g}(t) \leq c^2/2,$
   then  $\g[0,T)$  is below $\g^c[0,\tau_c]$.
     \item  If  $c^2/2 \leq LC_{\g}(t) < \infty,$  
  then $\g^c[0,\tau_c]$ is below the curve $\g[0,T)$.
   \item  If $-\infty < LC_{\g}(t)  \leq -c^2/2, $ 
     then  $\g[0,T)$  is below $\Gamma^c[0,\infty)$.
  \item If $-c^2/2 \leq LC_{\g}(t) <0, $ 
     then $\Gamma^c[0,\infty)$ is below  $\g[0,T)$.

\end{enumerate}
\end{thm}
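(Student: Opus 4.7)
My plan has two steps: first, reduce the hypothesis on Loewner curvature to a pointwise inequality of driving functions via an ODE comparison; then deduce the geometric ``below'' relation via the series expansion of Proposition \ref{SeriesProp} together with a first-touching contradiction.

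Set $\sigma=\lambda'$, so that by Proposition \ref{curvProp}, $LC_\gamma(t)=\sigma(t)^3/\sigma'(t)$. The reference driving functions satisfy autonomous ODEs with equality: $\sigma_c=\lambda_c'$ solves $\sigma_c'=(2/c^2)\sigma_c^3$, and $\Sigma_c=\Lambda_c'$ solves $\Sigma_c'=-(2/c^2)\Sigma_c^3$, each with initial value $1/2$. In case (1), the hypothesis $0<LC_\gamma(t)\leq c^2/2$ yields $\sigma>0$ and the differential inequality $\sigma'\geq(2/c^2)\sigma^3$; since the vector field $x\mapsto(2/c^2)x^3$ is locally Lipschitz and monotone on $\{x>0\}$, the standard ODE comparison theorem gives $\sigma\geq\sigma_c$ and hence $\lambda(t)\geq\lambda_c(t)$ on $[0,\min\{T,\tau_c\})$. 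The remaining three cases are entirely analogous and yield, respectively, $\lambda\leq\lambda_c$ in case (2), $\lambda\geq\Lambda_c$ in case (3), and $\lambda\leq\Lambda_c$ in case (4), on the common time interval.

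For the geometric step, the base-to-the-right assertion is immediate from Proposition \ref{SeriesProp}: because both curves share $\lambda'(0)=1/2$, their series \eqref{gseries} agree through order $t^{3/2}$, and the coefficient $b=\frac{4}{15}\lambda''(0)+\frac{1}{1080}$ is monotone in $\lambda''(0)$; the inequality on $\lambda''(0)$ inherited from the curvature hypothesis at $0$ then gives the required sign for $\mathrm{Re}(\gamma(h))-\mathrm{Re}(\gamma^c(h))$ (respectively $\mathrm{Re}(\gamma(h))-\mathrm{Re}(\Gamma^c(h))$) for small $h$ after reparametrizing by height. For non-crossing I would argue by contradiction: assume $t^*\in(0,T)$ is a first touching time, say $\gamma(t^*)=\gamma^c(s^*)$. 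Mapping by $g_{t^*}$ and shifting by $-\lambda(t^*)$ sends the tail $\gamma[t^*,T)$ to a Loewner curve $\tilde\gamma$ in $(\H,0,\infty)$ with driving function $\tilde\lambda(s)=\lambda(t^*+s)-\lambda(t^*)$, and sends $\gamma^c[s^*,\tau_c]$ to a Loewner curve $\hat\gamma$ starting at $0$ with some driving function $\hat\lambda$. The curvature bound on $\gamma$ is preserved by the concatenation property, so Step 1 applied to the tails gives a driving function inequality near $s=0$; combined with the base argument at the new origin, this forces $\tilde\gamma$ strictly to the right of $\hat\gamma$ just past $s=0$, contradicting the first-touching property.

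The main obstacle will be controlling $\hat\gamma$ after the map, since the Loewner curvature of $\gamma^c$ is not obviously preserved by $g_{t^*}$: it was $c^2/2$ in the slit domain $(\H\setminus\gamma^c[0,s^*],\gamma^c(s^*),\infty)$, whereas $\hat\gamma$ lives in $(\H,0,\infty)$, and $g_{t^*}$ is not a conformal map between these marked domains. To close the argument one must compute $\hat\lambda'(0)$ and $\hat\lambda''(0)$ directly from the Loewner equation near the touching point (using the regular expansion of $g_{t^*}$ away from the tip of $\gamma$) and check that they fall on the correct side of $\tilde\lambda'(0)$ and $\tilde\lambda''(0)$. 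The scenario of touching on $\mathbb{R}$, where $g_{t^*}$ is singular at the touching point, must be handled separately by a direct analysis of the hulls and of $\lambda-\lambda_c$ near $t^*$. A minor point in case (1) with $c<4$ is that $\sigma_c\to\infty$ as $t\to\tau_c$, so the ODE comparison automatically terminates at $\tau_c$ and matches the statement that the ``below'' relation is only asserted on $[0,\tau_c]$.
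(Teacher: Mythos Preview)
Your Step~1 is fine and is essentially the content of the paper's Lemma~\ref{sTime}: the inequality $\sigma'\geq(2/c^2)\sigma^3$ with $\sigma(0)=\sigma_c(0)$ is exactly the statement that the time change $s(t)=(\lambda_c')^{-1}(\lambda'(t))$ satisfies $s'(t)\geq 1$.  The base comparison via Proposition~\ref{SeriesProp} is also what the paper does.  The divergence is in how the global step is finished.

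The first-touching argument you sketch does not close, and the obstacle you yourself name is fatal rather than technical.  At a putative first touch $\gamma(t^*)=\gamma^c(s^*)$ you map both tails by $g_{t^*}$.  But $g_{t^*}$ has a square-root singularity at the tip $\gamma(t^*)$, so the image $\hat\gamma$ of $\gamma^c[s^*,\tau_c]$ does \emph{not} inherit a clean series expansion at $0$; its driving function $\hat\lambda$ is not accessible from $\lambda_c$ and the local expansion of $g_{t^*}$ alone, and in particular there is no reason for $\hat\lambda'(0)$ to equal $\tilde\lambda'(0)$, which you would need to re-run the base comparison.  (If the touch is tangential, which is the generic case at a first contact of smooth curves, $\hat\gamma$ may even approach $0$ tangentially to $\mathbb{R}$, so that its ``scale'' is $0$.)  Your proposed fix---computing $\hat\lambda'(0),\hat\lambda''(0)$ directly---would require detailed information about $g_{t^*}$ near its tip, not just away from it, and that depends on the full past of $\gamma$.

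The paper sidesteps this completely: instead of pushing both curves through the \emph{same} map, it pushes $\gamma$ through $g_t$ and $\gamma^c$ through its \emph{own} map $g^c_{s(t)}$ at the reparametrized time.  By self-similarity, $g^c_{s(t)}(\gamma^c)-\lambda_c(s(t))$ is always a rescaled copy of $\gamma^c$, so the comparison curve never loses its shape.  One then follows a single point $z_0=\gamma(\tau)$ presumed to lie outside $\gamma^c$ under the two flows
\[
\partial_t z_t=\frac{2}{z_t}-\lambda'(t),\qquad \partial_t w_t=\Bigl(\frac{2}{w_t}-\lambda'(t)\Bigr)s'(t),
\]
where the second uses $\lambda_c'(s(t))=\lambda'(t)$.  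At any instant where $z_t=w_t$ the velocities point in the same direction but $|\partial_t w_t|\geq|\partial_t z_t|$ (your ODE comparison, i.e.\ $s'\geq 1$), and together with the geometry of $\gamma^c$ this forces $z_t$ to remain outside the shrinking copy $\gamma^c_{s(t)}$.  This contradicts the fact that $z_t\to 0$ at the swallowing time $\tau<T$ while the base of $\gamma_t$ always stays below $\gamma^c_{s(t)}$.  The moral is: compare the \emph{flows} at matched scale, not the \emph{curves} under a common map.
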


The proof of each part of this theorem has two steps.  
First we must analyze the base of $\g$ using the power series expansion given in Proposition \ref{SeriesProp}.
For the second step, we analyze the curve away from its base by comparing the flow under $\l$ and the flow under $\l_c$ (or $\Lambda_c$).  This involves changing time for one of the flows to allow easier comparison.

For $g_t$ and $\g$  generated by $\l$, 
 set
 $$\g_t = g_t(\g)-\l(t),$$ 
that is, we ``map down" $\g[t,T]$ by the function $g_t$ and then shift so that the curve is rooted at the origin.  
We also have the notation $g^c_t, \g^c,$ and $ \g^c_t$ for the corresponding functions and curves associated with the driving function $\l_c$.   
We wish to compare $\g_t$ and $\g^c_t$.  However, we want to compare curves that are the same scale. 
This leads us to introduce the following time change:
for $t \in[0,T]$, set $$s = s(t)= (\l_c')^{-1}(\l'(t)).$$
In particular, we will have that $\l_c'(s) = \l'(t)$.

\begin{figure}
\centering
\includegraphics[scale=0.8]{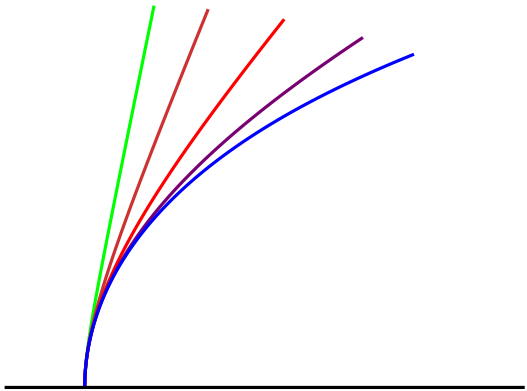}\hfill
\caption{Curves $\Gamma^c$ generated by $c\sqrt{c^2+t}-c^2$ for $c=1/2, 1, 2, 4, 8$.   \label{cccn}
}\end{figure}

\begin{lemma}\label{sTime}
Assume $\l \in C^2[0,T)$ with $  \l'(0)=1/2$ and $\l(0)=0$, and let $ c>0$.  
For $t \in[0,T]$, set $s = s(t)= (\l_c')^{-1}(\l'(t))$.  
\begin{enumerate}
  \item If $ 0 < LC_ \g(t) \leq c^2/2$, then, $s'(t) \geq 1$.
  \item  If $ c^2/2 \leq LC_ \g(t) < \infty $, then, $s'(t) \leq 1$.
  \item  If $-\infty < LC_{\g}(t)  \leq -c^2/2, $ then $s'(t) \geq 1$.
  \item If $-c^2/2 \leq LC_{\g}(t) <0$, then $s'(t) \leq 1$.

\end{enumerate}
\end{lemma}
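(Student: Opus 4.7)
My plan is to prove the lemma by a direct implicit differentiation of the defining equation for $s(t)$, leveraging the fact that the comparison driving functions $\lambda_c$ and $\Lambda_c$ have constant Loewner curvature.

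First I would treat the positive-curvature cases (1) and (2). Differentiating $\lambda_c'(s(t))=\lambda'(t)$ in $t$ gives
\[
s'(t)=\frac{\lambda''(t)}{\lambda_c''(s(t))}.
\]
Since $\gamma^c$ has constant Loewner curvature $c^2/2$, Proposition \ref{curvProp} applied to $\lambda_c$ gives $\lambda_c'(s)^3/\lambda_c''(s)=c^2/2$; substituting $\lambda_c'(s(t))=\lambda'(t)$ yields $\lambda_c''(s(t))=2\lambda'(t)^3/c^2$. Combining with $\lambda''(t)=\lambda'(t)^3/LC_\gamma(t)$, Proposition \ref{curvProp} applied to $\gamma$, produces the clean formula
\[
s'(t)=\frac{c^2/2}{LC_\gamma(t)},
\]
from which (1) and (2) follow immediately.

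The negative-curvature cases (3) and (4) are handled by the same recipe with $\Lambda_c$ in place of $\lambda_c$. In these cases $\lambda'(t)$ lies in $(0,1/2]$, outside the range of $\lambda_c'$, so the natural comparison driving function is $\Lambda_c$ and the implicit equation becomes $\Lambda_c'(s(t))=\lambda'(t)$. Using the analogous identity $\Lambda_c'(s)^3/\Lambda_c''(s)=-c^2/2$, the same derivation yields an analogous formula for $s'(t)$, and the inequalities on $s'(t)$ follow by comparing $|LC_\gamma(t)|$ with $c^2/2$.

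The proof is essentially a three-line computation, so there is no real obstacle. The only point requiring a little care is the sign bookkeeping in the negative-curvature cases: both $\lambda''(t)$ and $\Lambda_c''(s(t))$ are negative there, so their ratio $s'(t)$ remains positive, as required for a legitimate reparametrization, and one should keep track of whether the resulting bound on $s'(t)$ depends on $LC_\gamma(t)$ or on $|LC_\gamma(t)|$.
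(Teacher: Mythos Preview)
Your proof is correct and follows essentially the same route as the paper: differentiate $\lambda_c'(s(t))=\lambda'(t)$ to get $s'(t)=\lambda''(t)/\lambda_c''(s(t))$, then use the constant-curvature identity $\lambda_c'(s)^3/\lambda_c''(s)=c^2/2$ together with $\lambda_c'(s(t))=\lambda'(t)$ and the hypothesis on $LC_\gamma$ to bound the ratio. Your explicit formula $s'(t)=(c^2/2)/LC_\gamma(t)$ is a clean packaging of the computation, and your remark that $\Lambda_c$ must replace $\lambda_c$ in cases (3) and (4) (since $\lambda'(t)\le 1/2$ falls outside the range of $\lambda_c'$) makes explicit a point the paper leaves tacit.
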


\begin{proof}
We assume $ 0 < LC_ \g(t) \leq c^2/2$ to prove the first statement.  The other statements are proved in the same manner.

The reparametrization function $s(t)$ is well-defined because $\l_c'(t)$ is strictly increasing from  $1/2$ to $\infty$ and $\l$ is  strictly increasing with $\l'(0)=1/2$.
Since $\l'_c(s) = \l'(t)$ and
  $$ \frac{\l'(t)^3}{\l''(t)} \leq \frac{c^2}{2} = \frac{\l_c'(s)^3}{\l_c''(s)},$$
 it must be true that $\l_c''(s) \leq \l''(t)$.
  Thus 
  $\displaystyle s'(t) = \frac{\l''(t)}{\l_c''(s)} \geq 1.$
\end{proof}

Now for the proof of Theorem \ref{below}:

\begin{proof}

We assume that $0< LC_{\g}(t) \leq c^2/2,$ and we will prove the first statement; the remaining statements are proved in a similar manner.  

The first step is to prove that the base of $\g^c$ is to the left of the base of $\g$.
Since $0< LC_{\g}(t) \leq c^2/2$,
then $$\frac{\l'(0)^3}{\l''(0)} \leq \frac{c^2}{2} = \frac{\l_c'(0)^3}{\l_c''(0)}.$$
Thus $\l''(0) \geq \l_c''(0)$, since $\l'(0) = \l_c'(0) = 1/2$.
Proposition \ref{SeriesProp} implies that
$$\g(t) =  2i\sqrt{t} + \frac{1}{2} \, t - i \frac{1}{32}\, t^{3/2} + \left( \frac{4}{15} \l''(0)+\frac{1}{1080} \right)\, t^2 + o( t^2 )$$
for $t$ near 0.
Fix $h>0$, and let $t$ and $t_c$ satisfy
$  \text{Im}\, \g(t) = h =\text{Im}\, \g^c(t_c) $. 
That is,
$$  2 \sqrt{t} - \frac{1}{32} t^{3/2} +o(t^2) = h = 2 \sqrt{t_c} - \frac{1}{32} t_c^{3/2} +o(t_c^2),$$
which implies that
$$t-t_c = o\left( (t \vee t_c)^2 \right).$$
Then
\begin{align*}
\text{Re}\, \g(t) - \text{Re}\, \g^c(t_c) 
	&= \left[ \text{Re}\, \g(t_c) - \text{Re}\, \g^c(t_c) \right] 
	     - \left[ \text{Re}\, \g(t_c) - \text{Re}\, \g(t) \right]  \\
	&= \frac{4}{15} \left( \l''(0) - \l_c''(0) \right) t_c^2 + o\left( (t \vee t_c)^2 \right)     
\end{align*}
So for $h$ small enough, $\text{Re}\, \g(t) - \text{Re}\, \g^c(t_c) >0$, proving that
 the base of the curve $\g^c$ is to the left of the base of the curve $\g$. 

Since $\g^c$ is self-similar, 
 $\g^c_t$ is simply a scaled version of $\g^c$, that is $\g^c_t = r \cdot \g^c$.
By Proposition \ref{SeriesProp}, the scale of  $\g^c_t$ is $2\l_c'(t)/3$,
and the scale of $r \cdot \g^c $  is $ (2 \l_c'(0)/3) \cdot (1/r) $.  
Thus $r = \l_c'(0) / \l_c'(t)$, and
 $$\g_t^c = \frac{1}{2\l_c'(t)} \cdot \g^c.$$
Since $\l_c'(t)$ is strictly increasing, $\g^c_t$ is ``shrinking" as time increases.

Suppose that $\g$ is not below $\g_c$.  
Then $\g$ and $\g^c$ must cross, and  
there exists $z_0 = \g(\tau)$, where $0<\tau < T$, so that $z_0$ is ``outside" $\g^c$.  
If $c \geq 4$,
$z_0$ is outside $\g^c$ when $z_0$ is in  
 the unbounded component of $\H \setminus \g^c$.
 If $c < 4$, 
 $z_0$ is outside $\g^c$, if there is a continuous curve $\beta$ joining $\g^c(\tau_c)$ to $\R\setminus \{0\}$ in $\H \setminus \left( \g^c \cup \g(0, \tau) \right)$ with $z_0$ in the unbounded component of $ \H \setminus \left( \g^c \cup \beta \right)$.
 Set $z_t = g_t(z_0) - \l(t)$ and set $w_t = g^c_{s(t)}(w_0)-\l_c(s(t))$ for a point $w_0 \in \H$ which will be specified later. 
Then
\begin{equation}\label{flows}
\partial_t \, z_t = \frac{2}{z_t} - \l'(t)  \;\; \text{  and  } \;\;
\partial_t \, w_t = \left( \frac{2}{w_t} - \l'(t) \right) \cdot s'(t)
\end{equation}
using the fact that $\l_c'(s(t)) = \l'(t)$.
Since $z_0 = \g(\tau)$, then $z_t \in \g_t$ for $0<t < \tau$.

We claim that $z_t$ is outside $\g^c_{s(t)}$ for all $t < T$.  
Let $t_0$ be a time that $z_{t_0}$ is outside of  $\g^c_{s({t_0})}$, and  choose $w_0 \in \H$ so that  $z_{t_0} = w_{t_0} $. Now $w_t \notin \g^c_{s({t})}$ for $t > t_0$.
By \eqref{flows} and Lemma \ref{sTime}, the direction of motion for $z_t$ and $w_t$ at time $t=t_0$ is the same, but $|\partial_t w_{t_0}| \geq |\partial_t z_{t_0}|$. 
Because of the shape of the curve $\g^c$, in order for $z_{t}$ to move below $\g^c_{s(t)}$, 
$z_t$ must either move faster than $w_t$ or in a different direction than $w_t$ (or both.)

Thus $z_t$ remains outside $\g^c_{s(t)}$ for all $t <T$.  However, this leads to the contradiction.  The kill-time for $z_0$ is $\tau$ and $\tau <T$.  However, the base of $\g_t$ always remains below (i.e. not outside) $\g^c_{s(t)}$, and so $z_0$ cannot be killed before time $T$.

\end{proof}

To round out  this section, we mention the following result which is related to the infinite curvature case:

\begin{prop}
Assume $\g$ is generated by $\lambda \in C^1[0,T)$ with  $\l(0)=0$, and let $c>0$. 
\begin{enumerate}
  \item If $\l'(t) \geq c$, then $\g$ is below the curve generated by $ct$.
  \item If $\l'(t) \leq c$, then the curve generated by $ct$ is below $\g$.
\end{enumerate}
\end{prop}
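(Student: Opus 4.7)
I will prove statement (1); statement (2) is symmetric. The plan mirrors the two-step structure of the proof of Theorem \ref{below}: a base comparison using the initial power-series expansion of $\g$, followed by a flow comparison via the Loewner ODE. Throughout, let $\eta$ denote the curve generated by $ct$, with associated Loewner maps $\tilde g_t$. The essential simplification here is that $\eta$ is \emph{stationary} under its own Loewner flow --- that is, $\tilde g_t(\eta) - ct = \eta$ for every $t$ --- directly from the concatenation and translation properties applied to the linear driving function.

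First, I would carry out the base comparison. The first two terms of the expansion from Proposition \ref{SeriesProp} remain valid under only $C^1$ driving, giving $\g(t) = 2i\sqrt{t} + \tfrac{2}{3}\l'(0)\,t + o(t)$ and $\eta(t) = 2i\sqrt{t} + \tfrac{2}{3}c\,t + o(t)$. Reparametrizing by height $h = \Im\,\g(t) \approx 2\sqrt{t}$,
$$\Re\,\g(h) - \Re\,\eta(h) = \tfrac{1}{6}(\l'(0) - c)\, h^2 + o(h^2) \geq 0,$$
so the base of $\g$ lies weakly to the right of the base of $\eta$. The same computation applied to $\g_t$ (driven by $\l(t+\cdot)-\l(t)$, whose derivative at $0$ is $\l'(t) \geq c$) shows that the base of $\g_t$ remains weakly to the right of $\eta$ for every $t$.

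Next, I would run the flow comparison. Suppose for contradiction $\g$ is not below $\eta$; then some $z_0 = \g(\tau)$ lies in the component of $\H \setminus \eta$ opposite to $\g$'s natural side. Setting $z_t = g_t(z_0) - \l(t)$ and $w_t = \tilde g_t(w_0) - ct$, the ODEs
$$\partial_t z_t = \frac{2}{z_t} - \l'(t), \qquad \partial_t w_t = \frac{2}{w_t} - c$$
yield $\partial_t(z_t - w_t) = c - \l'(t) \leq 0$ at any coincidence point: the $z$-trajectory drifts leftward relative to $w$. Since $\eta$ is stationary, any $w$-trajectory starting in the wrong component remains there for all time. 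Hence, as in Theorem \ref{below}, I would conclude that $z_t$ cannot cross $\eta$ back into the correct (right) component, so $z_t$ stays in the wrong component for all $t \leq \tau$ --- contradicting $z_\tau = 0$, which by the base step lies in the closure of the right component.

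The main obstacle is the final topological step: rigorously showing that the relative leftward drift really does prevent $z_t$ from crossing $\eta$. This is handled by the same device used in Theorem \ref{below}: at any candidate crossing time $t_0$, pick $w_0 = \tilde g_{t_0}^{-1}(z_{t_0} + c\,t_0)$ so that $w_{t_0} = z_{t_0}$, then note that $|\partial_t w_{t_0}| \geq |\partial_t z_{t_0}|$ in the leftward direction forces $w$ to clear $\eta$ at least as fast as $z$ can. Combined with the fact that $w_t$ never crosses $\eta$, this precludes any crossing by $z$.
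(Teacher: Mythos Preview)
Your overall plan is exactly what the paper intends --- it offers no separate argument, only the remark that the proposition ``can be proved in the same manner as Theorem~\ref{below}'' --- and your observation that $\eta$ is stationary under its own shifted flow is the right replacement for the shrinking comparison curves $\g^c_{s(t)}$ used there.

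The gap is in your final paragraph. In Theorem~\ref{below} the time change $s(t)=(\l_c')^{-1}(\l'(t))$ is what forces the two velocities at a coincidence point to be \emph{parallel}, and the inequality $|\partial_t w_{t_0}|\geq|\partial_t z_{t_0}|$ then follows from $s'(t)\geq 1$. For linear driving $\l_c'\equiv c$ is constant, so this time change is unavailable, and the conclusion you quote simply does not hold: your own earlier computation gives $\partial_t z_{t_0}-\partial_t w_{t_0}=c-\l'(t_0)\leq 0$, a purely real vector, so the velocities point in \emph{different} directions and nothing controls their magnitudes. You cannot therefore reuse the ``same direction, slower speed'' step of Theorem~\ref{below} verbatim.

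The correct completion uses the decomposition you already have: at a point $z_{t_0}\in\eta$, write $\partial_t z_{t_0}=\partial_t w_{t_0}+(c-\l'(t_0))$, where $\partial_t w_{t_0}$ is tangent to $\eta$ and the second summand points weakly leftward. One then needs the geometric fact that the leftward direction points (weakly) into the ``above'' component at every point of $\eta$; equivalently, that $\Im\eta(t)$ is monotone. This is the analogue of ``the shape of the curve $\g^c$'' invoked in the proof of Theorem~\ref{below}, and it follows from the autonomous form of the flow: with $h=g-ct$ one has $\dot h=2/h-c$, hence $\Im\dot h=-2\Im h/|h|^2<0$ throughout $\H$, so the flow line $\eta$ has strictly increasing imaginary part.

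A smaller point: Proposition~\ref{SeriesProp} is stated for $\l\in C^2$, so the two-term expansion you use under only a $C^1$ hypothesis requires a short separate justification.
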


This can be proved in the same manner as Theorem \ref{below}.
\\


\begin{thebibliography}{abc}


\bibitem[EE]{EE} C. Earle, A. Epstein, Quasiconformal variation of slit domains, 
{\it Proc. Amer. Math. Soc. \bf 129} (2001), 3363--3372.

\bibitem[FG]{FG} J.L. Fern\'andez, A. Granados, On geodesic curvature and conformal mapping, 
{\it St. Petersburg Math. J. \bf 9} (1998), 615--637.

\bibitem[KNK]{KNK}  W. Kager, B, Nienhuis, L. Kadanoff, Exact solutions for Loewner Evolutions, 
{\it J. Statist. Phys. \bf 115} (2004), 805--822.

\bibitem[La]{La} G. Lawler, Conformally invariant processes in the plane,
{\it Mathematical Surveys and Monographs, \bf 114}, 
American Mathematical Society, Providence, RI, 2005.

\bibitem[Li]{Li} J. Lind, A sharp condition for the Loewner equation to generate slits,
{\it Ann. Acad. Sci. Fenn. Math. \bf 30} (2005), 143--158.

\bibitem[LMR]{LMR} J. Lind, D.E. Marshall, S. Rohde, Collisions and Spirals of Loewner Traces,
{\it Duke Math. J. \bf 154} (2010), 527--573. 

\bibitem[M]{M} D.E. Marshall, Derivation of Chordal Loewner from Radial Loewner, preprint.

\bibitem[RTZ]{RTZ} S. Rohde, H. Tran, M. Zinsmeister, The Loewner equation and Lipschitz graphs, preprint.

\bibitem[S]{S} A. Schur, \"Uber die Schwarz'sche Extremaleigenschaft des Kreises unter den Kurven konstanter Kr\"ummung, {\it Math. Ann.\bf 83}(1921), 143--148.

\bibitem[W]{W} C. Wong, Smoothess of Loewner slits, {\it Trans. Amer. Math. Soc. \bf 366} (2014), 1475--1496.


\end{thebibliography}
\end{document}